\documentclass{article}

\author{ M. Ciavarella, L. Terracini}

\usepackage{amssymb}

\newcommand\qed{\hfill \rule{6pt}{6pt}}
\newenvironment{proof}{\noindent{\bf Proof.\ }\hspace{1pt}}{\hspace{-5pt}\qed\par\bigskip}
\def\gp{{\mathfrak{p}}}
\newcommand{\s}{\times}
\newcommand{\QQ}{{\bf Q}}
\newcommand{\CC}{{\bf C}}
\newcommand{\A}{{\bf A}}
\newcommand{\RR}{{\bf R}}
\newcommand{\FF}{{\bf F}}
\newcommand{\XX}{{\bf X}}

\newcommand{\SL}{{\bf SL}}

\newcommand{\MM}{\mathcal M}

\newcommand{\OO}{\mathcal O}

\newcommand{\mR}{{\mathcal R}}
\newcommand{\ws}{\widehat\psi}
\newcommand{\ts}{\widetilde\psi}
\newcommand{\mm}{\mathfrak m}

\newcommand{\TT}{{\bf T}}

\newcommand{\tp}{{\widetilde \psi}}

\newcommand{\orho}{\overline\rho}

\newcommand{\ZZ}{{\bf Z}}

\newtheorem{theorem}{Theorem}[section]

\newtheorem{Propo1}{Proposition}[section]
\newtheorem{definition}{Definition}[section]
\newtheorem{corollary}{Corollary}[section]
\newtheorem{Lem1}{Lemma}[section]
\newtheorem{conjecture}{Conjecture}[section]
\def\modulo{{\rm mod}}
\def\End{{\rm End}}
\def\Hom{{\rm Hom}}
\def\Frob{{\rm Frob}}
\def\det{{\rm det}}
\def\galois{{\rm Gal}}

\def\tr{{\rm tr}}
\def\det{{\rm det}}
\def\ker{{\rm Ker}}

\def\Res{{\rm Res}}
\def\Cor{{\rm Cor}}

\begin{document}
\title{About an analogue of Ihara's lemma for Shimura curves}
\date{ }
\maketitle

\section*{Abstract}

The object  of this work is to present the status of art of an open problem: to provide an analogue for the cohomology of Shimura curves of the Ihara's lemma \cite{Ihara73} which holds for modular curves. We will formulate our conjecture and locate it in the more general setting of the congruence subgroup problem.
We will exploit the relationship between cohomology of Shimura curves and certain spaces of modular forms to establish some consequences of the conjecture  about congruence modules of  modular forms and about the problem of raising the level.

\bigskip

\noindent{\bf AMS Classification}: 11F80, 11F33, 11G18, 20G30.

\bigskip

\noindent{\bf Keywords}: cohomology of Shimura curves, congruence subgroups, Hecke algebras.
\section*{Introduction}
\noindent Let $N$ be a positive integer and $q$ be a prime number not dividing $N$. We denote by $\mathcal M(N)$ the Hecke modules arising from the cohomology with integer coefficients of the modular curve of level $N$. Ihara's lemma \cite{Ihara73} establishes that the sum of the two natural degeneracy maps $\alpha:\mathcal M(N)^2\to\mathcal M(Nq)$ has torsion-free cokernel. This result has been widely applied to detect congruences between modular forms \cite{Ribet83} and for establishing the freeness of a local component of $\mathcal M(N)$ over the Hecke algebra in the non-minimal case \cite{Wiles95}, \cite{WilesTaylor95}, \cite{DarmonDiamondTaylor95}, \cite{DiamondRibet97}. \\
Ihara's Lemma has been generalized by Diamond and Taylor \cite{DiamondTaylor94} to certain Hecke components of the $\ell$-adic cohomology of Shimura curves  with good reduction at $\ell$.
Their proof relies on a crystalline cohomology argument. The reader can refer to Section \ref{literature} for a more detailed exposition of known results.\\
In this  work (Section 1) we state a conjecture concerning a problem of Ihara for the $\ell$-adic cohomology of Shimura curves in some case of bad reduction. In Section 2 we give a review of the literature about Ihara's problems in some particular situations.  In Section 3 we relate our conjecture to more general conjectures about the subgroup congruence property. In the last section, we use the conjecture to generalize to non-minimal levels some Galois representations theoretic results about local components of Hecke algebras and cohomology of Shimura curves proved in the minimal case in \cite{Terracini03, miri2006}.

\section{The conjecture}\label{shimura}

\noindent Let $B$ be an indefinite quaternion algebra over $\QQ$ of discriminant $\Delta\not =1$ and let $\nu$ be its reduced norm. Let $R$ be a maximal order in $B$.   If $p$ is a prime not dividing $\Delta$, (included $p=\infty$) we fix an isomorphism $i_p:B_p\to M_2(\QQ_p)$ such that  $i_p(R_p)=M_2(\ZZ_p)$ if $p\not=\infty$, where $B_p=B\otimes_{\QQ}\QQ_p$ and $R_p=R\otimes_{\ZZ}\ZZ_p$. \par
We will denote by $B_\A$ the adelization of $B$, by $B_\A^\times$ the topological group of invertible elements in $B_\A$ and $B_\A^{\times,\infty}$ the subgroup of finite id\`{e}les.\\

\noindent We fix a prime $\ell>2$ and we suppose $\Delta=\Delta'\ell$.
Let $N$ be an integer prime to $\Delta$; if $p\not|\Delta$, we define:
$$K_p^0(N)=i_p^{-1}\left\{\gamma\in GL_2(\ZZ_p)\ |\ \gamma\equiv \left(\begin {array} {cc}
* & *\\
0 & *
\end {array}\right)\ \modulo\ N\right\}$$

$$K_p^1(N)=i_p^{-1}\left\{\gamma\in GL_2(\ZZ_p)\ |\ \gamma\equiv \left(\begin {array} {cc}
* & *\\
0 & 1
\end {array}\right)\ \modulo\ N\right\}.$$


We define
$$V_0(N)=\prod_{p\not|N}R_p^\s\s\prod_{p|N}K_p^0(N).$$
$$V_1(N)=\prod_{p\not|N\ell}R_p^\s\s\prod_{p|N}K^1_p(N)\s(1+\pi_\ell R_\ell)$$ where $\pi_\ell$ is a uniformizer of $R_\ell.$

\noindent We introduce a Dirichlet character $\psi:(\ZZ/\Delta\ell N\ZZ)^\s\to \CC^\s$;  as a  general hypothesis we will assume that the order of  $\psi$ is  prime to $\ell$. By an abuse of notation, we will denote by $\psi$ also the adelization of the Dirichlet character $\psi$ and we will  denote by $\psi_p$ the composition of $\psi$ with the inclusion $\QQ_p^\s\to\A^\s$. We fix a regular character $\chi:\ZZ_{\ell^2}^\s\to\overline\QQ^\s$ such that $\chi|_{\ZZ_\ell^\s}=\psi_\ell|_{\ZZ_\ell^\s}$. We observe that $\chi$ is not uniquely determinated by $\psi$; we extend $\chi$ to $\QQ_{\ell^2}^\s$ by putting $\chi(\ell)=-\psi_\ell(\ell)$.
 If we fix an embedding of $\overline\QQ$ in $\overline\QQ_\ell$, then we can regard the values of $\chi$ in this field.\\
By local classfield theory, $\chi$ may be regarded as a character of $I_\ell$. 
Then there is a $\ell$-type $\tau$ associated to $\chi$ (for a reference see \cite{ConradDiamondTaylor99}), $\tau=\chi\oplus\chi^\sigma$ where $\chi^\sigma$ is the non trivial element of $\galois(\QQ_{\ell^2}/\QQ_\ell)$.\\
We put $\widehat\psi=\prod_{p|N}\psi_p\s\chi$; then $\widehat\psi$ is a character of $V_0(N)$ and its kernel is  $V_1(N)$ where



\noindent For $i=0,1$, we consider the adelic Shimura curve associated to the group $V_i(N)$: $$\XX_i(N)=B_\QQ^\s\setminus B^\s_\A/K_\infty^+\s V_i(N)$$ where $K_\infty^+=\RR^\times SO_2(\RR).$

\noindent Let $\OO$ be the ring of integer of a finite extension $K$ of $\QQ_\ell$ containing $\QQ_{\ell^2}$ and the image of $\widehat\psi$; let $k$ be its residue field.\\
Since $K_p^0(N)/K_p^1(N)\simeq (\ZZ/p\ZZ)^\s$ and $R_\ell^\s/(1+\pi_\ell R_\ell)\simeq \FF_{\ell^2}^\times$ we can regard $\ws=\prod_{p|N} \psi_p\times \chi$ as a character of the group  $V_0(N)/V_1(N)\cong (\ZZ/N\ZZ)^{\s}\s\FF_{\ell^2}^{\s}$ by isomorphims $i_p$. Then $V_0(N)/V_1(N)$ naturally acts on $H^1(\XX_1(N),\OO)$ via the projection $\XX_1(N)\to\XX_0(N)$. As a general hypothesis, we suppose that the order of the group $V_0(N)/V_1(N)$ is invertible in $\OO$, thus if we denote  by $H^1(\XX_1(N),\OO)^{\widehat\psi}$ the sub-Hecke-module of $H^1(\XX_1(N),\OO)$ on which $V_0(N)/V_1(N)$ acts by the character $\widehat\psi$, then $H^1(\XX_1(N),\OO)^{\widehat\psi}$ is a direct summand of $H^1(\XX_1(N),\OO)$.

\noindent Let $S$ be a square-free number prime to $N\Delta$; by abuse of notation we will denote by $S$ also the finite set of rational primes $p_j$ such that $\prod_jp_j=S$.
 For $i=0,1$  we define the group $$V_i(N,S)=\left\{u\in V_i(N)\ |\ u_q\equiv\left(\begin {array} {cc}
 * & *\\
 0 & *
 \end {array}\right)\ \modulo\ q\ {\rm for}\ q|S\right\}; $$ we observe that $V_0(N,S)=V_0(NS)$ and if $S=\emptyset$ then $V_i(N,S)=V_i(N)$. For any prime number $q$ not dividing $\Delta N$ let $\eta_q$ be the id\`ele  in $B_\A^\s$ defined by $\eta_{q,v}=1$ if $v\not=q$ and $\eta_{q,q}=i^{-1}_q\left(
\begin{array}
[c]{cc}%
q & 0\\
0 & 1
\end{array}
\right).$
Then both $V_i(N,q)$ and $\eta_qV_i(N,q)\eta_q^{-1}$ are subgroups of $V_i(N)$. The two injections (inclusion and conjugation by $\eta_q$) give rise  to degeneracy maps on the Shimura curves $\XX_i(N,q)\to\XX_i(N)$ where, for $i=0,1$ and $S$ as above, $\XX_i(N,S)$ is the Shimura curve associated to the group $V_i(N,S)$. The degeneracy maps commute to the action of $V_0(N)/V_1(N)$ so that there is a map of cohomology groups
\begin{equation}\label{eq:mappaalpha}
\alpha:  H^1(\XX_1(N),\OO)^{\widehat\psi} \oplus H^1(\XX_1(N),\OO)^{\widehat\psi}\to H^1(\XX_1(N,q),\OO)^{\widehat\psi}\end{equation}
defined by $\alpha=\Res\oplus |_{\eta_q}$ where $\Res$ is the usual restriction map and $ |_{\eta_q}$ is the map on cohomology induced by conjugation by $\eta_q$. \\
Essentially the open problem is to verify that the map $\alpha$ has torsion-free cokernel. We will refer to  this kind of problems as \lq\lq Problem of Ihara\rq\rq; the motivation for this name will be clear in the Section \ref{literature}.\\
One can naturally define an Hecke algebra acting over each group cohomology defined above (see \cite{Hida88}, \cite{Terracini03}, \cite{miri2006}). A simplified version of the problem of Ihara would be to establish the torsion-freenes property of the cokernel of the map $\alpha$ restricted to  non-Eisenstein components of the cohomology. Thus we formulate the following weaker conjecture:

{}

\begin{conjecture}\label{noscon}
\noindent Let $q$ be a prime number such that $q\not|N\Delta\ell$. We fix a maximal non Eisenstein ideal $\mm$ of the Hecke algebra  $\TT^{\widehat\psi}(N)$ acting on the group $H^1(\XX_1(N),\OO)^{\widehat\psi}$. Let $\TT^{\widehat\psi}(N,q)$ be the Hecke algebra acting on $H^1(\XX_1(N,q),\OO)^{\widehat\psi}$, let $\mm_q$ be the inverse image of $\mm$ under the natural map $\TT^{\widehat\psi}(N,q)\to\TT^{\widehat\psi}(N)$.
The map $$\alpha_\mm:H^1(\XX_1(N),\OO)_\mm^{\widehat\psi}\s H^1(\XX_1(N),\OO)_\mm^{\widehat\psi}\to H^1(\XX_1(N,q),\OO)_{\mm_q}^{\widehat\psi}$$  has a torsion free cokernel.


\end{conjecture}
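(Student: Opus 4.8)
The plan is to reduce the assertion to a statement in characteristic $\ell$ and then to remove the resulting obstruction by means of the non Eisenstein hypothesis on $\mm$. Since $\OO$ is a discrete valuation ring, the Shimura curve is compact (as $\Delta\neq 1$), so each $H^1(\XX_i(N,S),\OO)$ is a finitely generated free $\OO$-module; under the standing hypotheses that the order of $\psi$ be prime to $\ell$ and that the order of $V_0(N)/V_1(N)$ be invertible in $\OO$, the $\widehat\psi$-isotypic part is a direct summand, so reduction modulo the uniformizer commutes both with projection onto the $\widehat\psi$-component and with localization at $\mm$. Poincaré duality identifies the adjoint $\alpha_\mm^\vee$, and the composite $\alpha_\mm^\vee\alpha_\mm$ is the Hecke matrix with diagonal entries $q+1$ and off-diagonal entries $T_q$, of determinant $(q+1)^2-T_q^2$; this is invertible on the non Eisenstein part over $K$ (where $T_q=\pm(q+1)$ cannot occur), so $\alpha_\mm$ is injective over $K$. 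For a map of free $\OO$-modules that is injective over $K$, the cokernel is torsion free exactly when the reduction does not drop rank, so the whole problem reduces to proving the injectivity of
$$\overline\alpha_\mm\colon H^1(\XX_1(N),k)^{\ws}_\mm\oplus H^1(\XX_1(N),k)^{\ws}_\mm\longrightarrow H^1(\XX_1(N,q),k)^{\ws}_{\mm_q}.$$
First I would make this homological reduction precise, and then concentrate entirely on the vanishing of $\ker(\overline\alpha_\mm)$.

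\noindent The second step is to exploit the geometry at $q$. Through the complex uniformization of the Shimura curves together with strong approximation for the indefinite algebra $B$, the cohomology above is identified with the cohomology of the relevant $S$-arithmetic congruence subgroups, the two components of $\alpha$ becoming the source and target maps attached to the edges of the Bruhat--Tits tree $\mathcal{T}_q$ of $PGL_2(\QQ_q)$. The decisive structural observation is that, once the prime $q$ is adjoined, the $S=\{q,\infty\}$-rank of the ambient group equals $2$: this is exactly the range in which the congruence subgroup property is available, and it is the conceptual reason that an Ihara-type statement should hold; it is also the point of contact with the congruence subgroup considerations of the present paper. Concretely, any nonzero class in $\ker(\overline\alpha_\mm)$ produces a mod $\ell$ harmonic cocycle on $\mathcal{T}_q$ that is degenerate at $q$, hence a class factoring through the abelianization of the $S$-arithmetic group.

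\noindent Thirdly, I would invoke the congruence subgroup property to control this abelianization. The expectation is that it forces the kernel of $\overline\alpha$, before localization, to be built only from classes on which the Hecke operators act through an Eisenstein system, that is $T_p\equiv p+1$ for almost all $p$ and $\orho_\mm$ reducible. Since by hypothesis $\mm$ is non Eisenstein, no such class can survive localization at $\mm$; therefore $\ker(\overline\alpha_\mm)=0$, the map $\overline\alpha_\mm$ is injective, and $\coker(\alpha_\mm)$ is torsion free, as required.

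\noindent\textbf{The main obstacle.} The delicate point is the passage to $\OO$-integral (i.e. mod $\ell$) coefficients in the presence of the ramification of $B$ at $\ell$. Strong approximation and the congruence subgroup property yield the assertion over $K$, but torsion freeness is an integral statement, and $\ell$ is precisely the prime at which $B$ ramifies: the local component at $\ell$ is governed by the supercuspidal type $\tau=\chi\oplus\chi^\sigma$ rather than by a principal series, which is exactly the situation excluded from the crystalline argument of Diamond and Taylor \cite{DiamondTaylor94}, valid only under good reduction at $\ell$. One must therefore control the $\ell$-integral structure of the $\chi$-isotypic cohomology at the ramified prime and verify that the reduction to an Eisenstein system in the third step is not disturbed by this ramification. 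Carrying out this integral control at bad $\ell$ is the crux of the matter, and is the reason the statement remains, for the moment, a conjecture.
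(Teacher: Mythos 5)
Your architecture reproduces, almost step for step, the conditional argument of Section 3 of the paper: reduce (by Nakayama, or equivalently by your rank argument for maps of free $\OO$-modules) to the injectivity of $\alpha_\mm\otimes k$ in characteristic $\ell$; realize the two degeneracy maps through the amalgam $\Phi=\Phi_0(N)\ast_{\Phi_0(Nq)}\delta_q^{-1}\Phi_0(N)\delta_q$ acting on the Bruhat--Tits tree at $q$, so that a class in $\ker(\alpha\otimes k)$ lifts through the Lyndon exact sequence to $H^1(\Phi,k(\tp))$ and hence dies on some finite-index subgroup of $\Phi$; then kill it at the non-Eisenstein ideal $\mm$ via the restriction-injectivity statement (Proposition \ref{prop:iniettivitarestr}, which rests on the annihilator $T_p^2-\psi(p)(p+1)^2$ of Proposition \ref{prop:annullatori}). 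The genuine gap is in your third step: you assert that because the $S=\{q,\infty\}$-rank of $B^{(1)}$ equals $2$, the congruence subgroup property "is available". It is not. For the anisotropic group $B^{(1)}$ ($B$ is a division algebra since $\Delta\neq 1$), CSP in $S$-rank $\geq 2$ is precisely Serre's conjecture, not a theorem; the paper proves only the conditional statement that CSP for $\Phi$ implies Conjecture \ref{noscon}, and then records that CSP would follow from centrality of the $\Sigma$-congruence kernel $C(\Sigma)$ (by Rapinchuk, using the Platonov--Margulis conjecture as proved by Segev et al.), which in turn would follow from bounded generation of $\Phi$ --- all of which remain open. So your proposal is, in substance, a rediscovery of the paper's reduction of the conjecture to CSP, with the one missing hypothesis misstated as known; as a proof of the conjecture it fails exactly there.

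Relatedly, your closing paragraph misidentifies the bottleneck. The $\ell$-integral structure at the ramified prime is not the crux: it is disposed of by the Nakayama reduction you yourself perform (as at the end of Section 1 of the paper), and the group-cohomological approach is insensitive to the supercuspidal type at $\ell$ once the character $\ws$ is fixed; the bad reduction at $\ell$ only explains why the geometric argument of Diamond--Taylor is unavailable, which is why the paper turns to the congruence subgroup problem in the first place. Two smaller corrections: by Lemma \ref{lem:dualita} the Gram matrix of $\tilde\beta\circ\tilde\alpha$ has \emph{diagonal} Hecke entries $\psi^{-1}(q)T_q$ and $T_q$ and \emph{off-diagonal} entries $q+1$, so its determinant is $\psi^{-1}(q)T_q^2-(q+1)^2$, not $(q+1)^2-T_q^2$; and its nonvanishing over $K$ follows from the Weil bound $|a_q|\leq 2\sqrt q<q+1$, not from non-Eisensteinness of $\mm$. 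Indeed $T_q^2-\psi(q)(q+1)^2$ may perfectly well lie in $\mm$ --- that is the level-raising condition (\ref{eq:condizione}) --- which is exactly why injectivity mod $\lambda$ is not formal and the conjecture has content.
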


\noindent It easily follows from the Hochschild-Serre spectral sequence that $$H^1(\XX_1(N,S),\OO)^{\widehat\psi}\simeq H^1(\XX_0(NS),\OO({\widehat\psi}))$$  where $\OO({\widehat\psi})$ is the sheaf $B^\s\setminus B_\A^\s\s\OO/K_\infty^+\s V_0(NS),$ $B^\s$ acts on $B^\s_\A\s\OO$ on the left by $\alpha\cdot(g,m)=(\alpha g,m)$ and $K_\infty^+\s V_0(NS)$ acts on the right by $(g,m)\cdot v=(g,m)\cdot(v_\infty,v^\infty)=(gv,{\widehat\psi}(v^\infty)m)$ where $v_\infty$ and $v^\infty$ are respectively the infinite and finite part of $v$.

\noindent For $i=0,1$, let $\Phi_i(N,S)=(GL_2^+(\RR)\s V_i(N,S))\cap B_\QQ^\s$.  We observe that $\Phi_0(N,S)=\Phi_0(NS)$.  By the isomorphism $i_\infty$, the group $\Phi_i(N,S)$ can be considered as a discrete subgroup  having finite covolume in $SL_2(\RR)$ \cite{Vigneras80}. Since we are supposing  $\Delta\not=1$, then $B$ is a division algebra, $SL_2(\RR)/\Phi_i(N,S)$ is compact and $\Phi_i(N,S)\setminus SL_2(\RR)/SO_2(\RR)$ is a compact Riemann surface.\\
By simplicity, we shall assume that the group $\Phi_0(NS)$ does not contain elliptic elements. If not, there is a standard technique to add a prime $s$ in the level such that $\Phi_0(NS)$ has not elliptic elements and the local Hecke component of the cohomology at level $NSs$ is the same that at level $NS$.\\
\noindent By translating to the cohomology of groups we obtain (see \cite{deSmitLenstra97}, Appendix) $H^1(\XX_1(N,S),\OO)^{\widehat\psi}\simeq H^1(\Phi_0(NS),\OO(\ts)),$ where $\ts$ is the restriction of ${\widehat\psi}$ to $\Phi_0(NS)/\Phi_1(N,S)$ and $\OO(\widetilde\psi)$ is $\OO$ with the action of $\Phi_0(NS)$ given by $a\mapsto\widetilde\psi^{-1}(\gamma)a$.
By this analysis, we can translate the map $\alpha$ in cohomology of groups. By strong approximation, write $\eta_q=\delta_qg_\infty u$, with $\delta_q\in B^\s$, $g_\infty\in GL_2^+(\RR)$ and $u\in V_0(Nq)$.
Consider first the map
\begin{eqnarray}
|_{\eta_q}:H^1(\Phi_0(N),\OO(\tp)) & \to & H^1(\Phi_0(N,q),\OO(\tp))\nonumber\\
x   & \longmapsto & x|_{\eta_q}\nonumber
\end{eqnarray}
working as follows: let $\xi$ be a cocycle representing the cohomology class $x$ in $H^1(\Phi_0(N),\OO(\tp))$; then $x|_{\eta_q}$ is represented by the cocycle $\xi'(\gamma)=\widehat\psi^{-1}(u)\cdot\xi(\delta_q\gamma\delta_q^{-1}).$ It is easy to see that this definition does not depend on the choice of the approximation. Define now:
\begin{eqnarray}
\alpha:H^1(\Phi_0(N),\OO(\tp))\s H^1(\Phi_0(N),\OO(\tp))&\to&H^1(\Phi_0(N,q),\OO(\tp))\nonumber\\
(x\ \ \ \ \ \ ,\ \ \ \ \ \ y)\ \ \ \ \ \ \ \ \ \ \ & \longmapsto & \Res(x)+y|_{\eta_q}\nonumber
\end{eqnarray}
where $\Res$ is the restriction map $\Res_{\Phi_0(N)/\Phi_0(N,q)}$ from $\Phi_0(N)$ to $\Phi_0(N,q)$.

\noindent We observe that by Nakayama's lemma,  Conjecture \ref{noscon} is equivalent to prove that the map $$\alpha_\mm\otimes k:H^1(\Phi_0(N),k(\tp))_\mm\s H^1(\Phi_0(N),k(\tp))_\mm\to H^1(\Phi_0(N,q),k(\tp))_{\mm_q}$$ is injective.

\section{The literature about the problem of Ihara}\label{literature}

\noindent Besides the intrinsic interest of Ihara's problem, it turned out to be an essential tool to solve problems of raising the level of modular forms. For this reason, it has been generalized in many ways, and the literature about this problem is very rich. In this section we will present the most relevant results about this subject.

\subsection{The modular case}\label{modular}

\noindent Let us consider a quaternion algebra $B$ with discriminant $\Delta=1$. Under this assumption it is known that  $B\cong M_2(\QQ)$. Let $N$ be a positive integer number and let $q$ be a prime number $q\not|N$. We consider the modular curve $X(N)$ (resp. $X_1(N)$) associated to the congruence subgroup $\Gamma(N)$ (resp. $\Gamma_1(N)$) and let $X(N,q)$ (resp. $X_1(N,q)$) be the modular curve associated to  $\Gamma(N)\cap\Gamma_0(q)$ (resp. $\Gamma_1(N)\cap\Gamma_0(q)$) \cite{Shimura71}. In 1983, Ribet \cite{Ribet83}, observed that for all prime numbers $\ell$, the problem of showing that the map $$\alpha\otimes_\OO k:H^1(X(N),\ZZ/\ell\ZZ)\oplus H^1(X(N),\ZZ/\ell\ZZ)\to H^1(X(N,q),\ZZ/\ell\ZZ)$$ is injective is just a special case of Lemma 3.2 of Ihara \cite{Ihara73}. For this reason we are referring to this kind of results as the \lq\lq problem of Ihara\rq\rq. In \cite{Ribet83} Ribet gives a more direct proof of the injectivity of $\alpha\otimes_\OO k$ working with the  parabolic cohomology groups . The key of his proof is the knowledge of the amalgamamated product $$\Gamma=\Gamma(N)\ast_{\Gamma(N,q)}\left(\left(
\begin{array}
[c]{cc}%
q & 0\\
0 & 1
\end{array}
\right)^{-1}\Gamma(N)\left(
\begin{array}
[c]{cc}%
q & 0\\
0 & 1
\end{array}
\right)\right)$$  and the injectivity of $\alpha\otimes_\OO k$ can be deduced (via the exact sequence of Lyndon \cite{Serre80}) by the vanishing of the parabolic cohomology group of $\Gamma$ with coefficients in $\FF_\ell$.  \\

\noindent In \cite{ConradDiamondTaylor99} Conrad-Diamond and Taylor prove the problem of Ihara related to a map $\alpha_\mm$ obtained by considering the localization of the weight $n$ cohomology to a  non-Eisenstein maximal ideal of the Hecke algebra acting on it. They require that the prime $q$ does not divide $N\ell$. Also in this case, the key of the proof is the knowledge of the amalgamated product $\Gamma$ and its  congruence subgroup property \cite{Serre970}. \\

\noindent In \cite{Diamond91},  Diamond  generalizes the work of Ribet to an arbitrary weight $n$ cohomology, under the hypothesis that $q\not|N(n-2)!$.  Also in this case the argument is the vanishing of the parabolic cohomology group of the amalgama $\Gamma$.  \\

\subsection{The quaternionic case}\label{quat}

\noindent We now consider an indefinite quaternion algebra $B$ over $\QQ$ of discriminant $\Delta>1$. The group-theoretic approaches working for elliptic modular curves seem to be inadequate for Shimura curves. In particular, the congruence subgroup property for $q$-arithmetic subgroups of $B^\s$ is conjectured, but unfortunately not known.\\

\noindent In \cite{DiamondTaylor94}, Diamond and Taylor  prove a problem of Ihara for Shimura curves in a special non-Eisenstein case, using a Galois theoretic argument.  Their make the following hypotheses:

\begin{enumerate}
	\item the prime $\ell$ does not divide the discriminant $\Delta$ of $B$;
	\item they work with Shimura curves arising from an open compact subgroup $U\subset B_\A^\s$ sufficiently small: it must be  contained in  the group $$V(N):=\prod_{p\not|N}R_p^\s\s\prod_{p|N}K^1_p(N)$$ 
	for $N$ prime to $\Delta$.
\end{enumerate}


\noindent We observe that  hypotheses 1. and 2., assure that the Shimura curve associated to such a group $U$, have good reduction at $\ell$, so they give a new argument for results about the problem of Ihara, which uses the arithmetic geometry of the curves.\\
We observe that the reason for which the approach of Diamond-Taylor fails in our case, is that under our hypotheses, the Shimura curves $\XX_i(N,S)$ do not have good reduction at $\ell$ (see for example \cite{Carayol86}). \\

\section{A reformulation in terms of the subgroup congruence property}

\noindent If $G$ is a subgroup of $B_{\bf A}^{\times}$, we denote by $G^{(1)}$ the subgroup of reduced norm $1$ elements of $G$.

For $p\not=\ell$ we denote by $T_p$ the Hecke operator at $p$ acting on $H^1(\Phi_0(N),\OO(\tilde\psi))$ and $H^1(\Phi_0(N),k(\tilde\psi))$ and by $U_p$ the Hecke operator at $p$ acting on $H^1(\Phi_0(N),\OO(\tilde\psi))$ and $H^1(\Phi_0(Np),k(\tilde\psi))$, see for example \cite{Hida88}.
We observe that for $p$ not dividing $N\Delta$
\begin{equation}\label{eq:corestr} T_p(x)={\rm Cor}_{\Phi_0(N)/\Phi_0(Np)}(x|_{\eta_p})\end{equation}

\noindent The following properties of Hecke operators are well-known, and easily proved by straighforward calculations:

\begin{Propo1}
Let $p,q$ be primes not dividing $N\Delta\ell$. For every $x\in H^1(\Phi_0(N),\OO(\tp))$ the following equalities hold:
$$T_p(x|_{\eta_q})=T_p(x)|_{\eta_q}\ \ \ {\rm if}\ p\not =q,$$
$$T_p(\Res_{\Phi_0(N)/\Phi_0(Nq)}(x))=\Res_{\Phi_0(N)/\Phi_0(Nq)}(T_p(x))\ \ \ {\rm if}\ p\not =q,$$
\begin{equation}  \label{r}
U_p(x|_{\eta_p})=p\psi(p)\Res_{\Phi_0(N)/\Phi_0(Np)}(x),
\end{equation}
\begin{equation}  \label{s}
U_p(\Res_{\Phi_0(N)/\Phi_0(Np)}(x))=\Res_{\Phi_0(N)/\Phi_0(Np)}(T_p(x))-x|_{\eta_p}.
\end{equation}
\end{Propo1}

\begin{Propo1}\label{prop:annullatori}
Let $p$ be a prime not dividing $N$, and $x_1,x_2\in H^1(\Phi_0(Np),k(\tilde\psi))$. Suppose that \begin{equation}\label{eq:ipot} Res_{\Phi_0(N)/\Phi_0(Np)}(x_1)= x_2|_{\eta_p}.\end{equation}
 Then
\begin{itemize}
\item[a)] $T_p(x_2)=(p+1)x_1$.
\item[b)] $T_p(x_1)=\psi(p)(p+1)x_2$.
\item[c)] The operator $T_p^2-\psi(p)(p+1)^2$ annihilates $x_1$ and $x_2$.
\end{itemize}
\end{Propo1}

\begin{proof}
We obtain (a) directly applying corestriction to (\ref{eq:ipot}). By applying $U_p$ followed by corestriction to (\ref{eq:ipot}), we get
$pT_p(x_1)=p\psi(p)(p+1)x_2$ and thus (b), since $p\not=\ell$. Claim (c) is an immediate consequence of (a) and (b).
\end{proof}

\begin{corollary} Let $M$ be an integer and suppose that (\ref{eq:ipot}) holds for every prime $p$ not dividing $MN\Delta$. Then the annihilator of $x_1$ (and $x_2$) is contained in an Eisenstein ideal.
\end{corollary}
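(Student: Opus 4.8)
The plan is to extract from part (c) of Proposition~\ref{prop:annullatori} a rigid constraint on the Hecke eigenvalues occurring in the support of $x_1$, and then to translate it, through the attached Galois representation, into the reducibility that characterises an Eisenstein ideal. We may assume $x_1\neq 0$, so that $\mathrm{Ann}(x_1)$ is a proper ideal of $\TT^{\ws}(N)$; it then suffices to prove that \emph{every} maximal ideal $\mm\supseteq\mathrm{Ann}(x_1)$ is Eisenstein, and the same reasoning will apply to $x_2$.

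First I would record the arithmetic input. Because (\ref{eq:ipot}) is assumed for every prime $p\nmid MN\Delta$, part (c) of Proposition~\ref{prop:annullatori} shows that $T_p^2-\psi(p)(p+1)^2$ annihilates $x_1$ for all such $p$; hence this operator lies in $\mathrm{Ann}(x_1)\subseteq\mm$, giving
$$T_p^2\equiv\psi(p)(p+1)^2\pmod{\mm}\qquad(p\nmid MN\Delta).$$

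Next I would bring in the semisimple representation $\orhos_\mm:\galois(\overline\QQ/\QQ)\to\GL_2(\overline k)$ attached to the system of Hecke eigenvalues modulo $\mm$, whose existence and whose Frobenius traces $\tr\orhos_\mm(\Frob_p)=T_p\bmod\mm$ and determinant $\det\orhos_\mm(\Frob_p)=\psi(p)p$ (for $p\nmid MN\Delta$) are part of the theory recalled in \cite{Terracini03,miri2006}. If $\alpha_p,\beta_p$ denote the eigenvalues of $\orhos_\mm(\Frob_p)$, the congruence above yields $\alpha_p\beta_p=\psi(p)p$ and $(\alpha_p+\beta_p)^2=\psi(p)(p+1)^2$, hence $(\alpha_p-\beta_p)^2=\psi(p)(p-1)^2$. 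Since $p\nmid\Delta$ forces $p\neq\ell$, the eigenvalues are nonzero, and solving (using $\ell>2$ to divide by $2$) gives $\{\alpha_p,\beta_p\}=\{s_p,ps_p\}$ with $s_p^2=\psi(p)$; in particular the ratio of the Frobenius eigenvalues is $p^{\pm1}$ for every $p\nmid MN\Delta$.

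Finally I would convert this eigenvalue condition into genuine reducibility. The eigenvalues of $\mathrm{ad}^0\orhos_\mm(\Frob_p)$ are then $\{1,p,p^{-1}\}$, so $\tr(\mathrm{ad}^0\orhos_\mm)(\Frob_p)=1+p+p^{-1}$ on a set of primes of density one; by Brauer--Nesbitt and Chebotarev this forces $\mathrm{ad}^0\orhos_\mm\cong\mathbf 1\oplus\overline\varepsilon\oplus\overline\varepsilon^{-1}$, where $\overline\varepsilon$ is the mod-$\ell$ cyclotomic character. Thus $\mathrm{ad}^0\orhos_\mm$ contains the trivial representation, i.e. $\End_{\galois}(\orhos_\mm)$ has a nonzero trace-zero element; were $\orhos_\mm$ irreducible, Schur's lemma would give $\End_{\galois}(\orhos_\mm)=\overline k$, whose only trace-zero element is $0$ since $\ell>2$ — a contradiction. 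Hence $\orhos_\mm$ is reducible and $\mm$ is Eisenstein. I expect the main obstacle to be precisely the input cited from \cite{Terracini03,miri2006}: that in this situation of bad reduction at $\ell$ and nontrivial character $\ws$ a two-dimensional $\orhos_\mm$ with the stated traces and determinant is indeed available for every $\mm$ in the support of $x_1$; once this is granted, the eigenvalue computation and the adjoint/Schur argument are entirely formal.
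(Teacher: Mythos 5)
Your opening step is exactly the paper's (implicit) proof, and it already finishes the job. The paper states this corollary with no further argument because, with its working notion of Eisenstein ideal --- visible in the proof of Proposition \ref{prop:iniettivitarestr}(b), where ``$\mm$ is non Eisenstein'' is used to mean precisely that $\mm$ cannot contain $T_p^2-\psi(p)(p+1)^2$ for almost every $p$ --- Proposition \ref{prop:annullatori}(c) shows that the annihilator of $x_1$ (and of $x_2$) contains $T_p^2-\psi(p)(p+1)^2$ for every prime $p$ not dividing $MN\Delta$, so (assuming $x_1\neq 0$, as you do) any maximal ideal over this annihilator satisfies these congruences for almost all $p$ and is therefore Eisenstein. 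Everything you do after your first displayed congruence is a genuinely different, logically independent continuation: you unfold ``Eisenstein'' representation-theoretically and prove that the congruences force the semisimple residual representation $\orhos_\mm$ to be reducible. This buys a stronger conclusion --- it identifies the trace-congruence notion of Eisenstein with the reducibility notion --- but at the cost of a substantial extra input, namely the existence of $\orhos_\mm$ with the stated Frobenius traces and determinant for an arbitrary, possibly Eisenstein, maximal ideal of the quaternionic Hecke algebra; the corollary, as the paper uses it, needs none of this.

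Within that extra part there is one step you should tighten: Chebotarev plus Brauer--Nesbitt identifies only the \emph{semisimplification} of $\mathrm{ad}^0\orhos_\mm$ with $\mathbf 1\oplus\overline\varepsilon\oplus\overline\varepsilon^{-1}$, i.e.\ it makes $\mathbf 1$ a Jordan--H\"older factor; your Schur argument, however, needs $\mathbf 1$ to occur as a \emph{subrepresentation}, i.e.\ it needs a nonzero Galois-invariant trace-zero endomorphism, and in characteristic $\ell$ a trivial Jordan--H\"older factor need not split off. Since $\orhos_\mm$ is semisimple and $\ell>2$, this is repairable (for instance by Serre's theorem on semisimplicity of tensor products of semisimple representations in small dimension, which makes $\orhos_\mm\otimes(\orhos_\mm)^{*}$, hence its direct summand $\mathrm{ad}^0\orhos_\mm$, semisimple); also, for $\ell=3$ one should compare characteristic polynomials rather than traces in Brauer--Nesbitt (this works because $\det\mathrm{ad}^0\equiv 1$ on both sides). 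As written, though, these are gaps in the superfluous half of your argument, not in the part that actually proves the corollary.
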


\noindent If $U$ is an open compact subgroup of $B_{\bf A}^{\times,\infty}$, we put $\Phi(U)=GL_2^+(\RR)U\cap B^\s$.\\
Let $M$ be an integer and define the compact open subgroup of $B_{\bf A}^{\times,\infty}$
$$U(M)=\prod_{p\not | M} R_p^\times \times \prod_{p|M}(1+MR_p).$$
\begin{definition} A subgroup $\Psi$ of $\Phi_0(M)$ is a congruence subgroup if it contains $\Phi(U(M))$ for some integer $M$.
\end{definition}

\begin{Propo1}\label{prop:iniettivitarestr} Let $\Psi$ be a congruence subgroup of $\Phi_0(N)$. Then
\begin{itemize}
\item[a)]
the kernel of the restriction
$$\Res_{\Phi_0(N)/\Psi}:H_1(\Phi_0(N),k(\tilde\psi))\to H_1(\Psi,k(\tilde\psi))$$
is stable for Hecke operators $T_p$, for almost every prime $p$.
\item[b)] let $\mm$ be a non Eisenstein maximal ideal of $\TT^{\hat\psi}(N)$. Then the restriction
 $${\rm Res}_{\Phi_0(N)/\Psi}:H^1(\Phi_0(N),k(\tilde\psi))_\mm\longrightarrow H^1(\Psi,k(\tilde\psi))$$
 is injective.
 \end{itemize}
\end{Propo1}
\begin{proof}
\begin{itemize} \item[a)] Suppose that $\Psi\supseteq \Phi(U(M))$ and let $p$ be a prime not dividing $N\ell M$. By strong approximation we can decompose $\eta_p=\delta_p g_\infty u$ with $\delta_p\in B^\times$, $h_\infty\in{\rm GL}_2^+(\RR)$ and $u\in V_0(Np)$.
 Observe that since $p\not |M$, $\Phi$ is dense in $\Phi_0(N)$ with respect to the $p$-adic topology, so that the map  $\Psi \to \Phi_0(N)/\Phi_0(Np)$ is surjective. Then we can choose in $\Psi$ a set of representatives $\{h_1,...,h_s\}$ of right cosets of $\Phi_0(Np)$ in $\Phi_0(N)$, and we get a decomposition of double cosets $\Phi_0(N)\delta_p\Phi_0(N)=\coprod_i\Phi_0(N)\delta_p h_i$
 and $\Psi\delta_p\Psi=\coprod_i\Psi\delta_p h_i$.
 Let $\xi$ be a cocycle representing a cohomology class in ${\rm H^1}(\Phi_0(N),k(\tilde\psi))\mm $ which restricts to zero  on $\Psi$, and let $\gamma\in\Psi$. Write $\delta_ph_i\gamma=\gamma_i\delta_ph_{j(i)}$ with $\gamma_i\in \Psi$. By definition $(T_p\xi)(\gamma)=\sum_i\psi(\delta_ph_i)\xi(\gamma_i)=0$.
 \item[b)] We can suppose that $\Psi=\Phi(U(M))$ with $N\ell$ dividing $M$, so that $\tilde\psi$ is trivial over $\Psi$. Let $\xi$ be a cocycle representing a cohomology class in ${\rm H^1}(\Phi_0(N),k(\tilde\psi))_\mm$ which restricts to zero  on $\Psi$ and let $\gamma\in\Phi_0(Np)$. Since $\Psi$ is normal in $\Phi_0(N)$, we can write $h_i\gamma=\gamma h'_ih_{j(i)}$, with $h'_i\in \Psi\cap\Phi_0(Np)$; then  $\delta_ph'_i\delta_p^{-1}\in \Psi$ and
$(T_p\xi)(\gamma)=\psi(\delta_p)\sum_i\xi(\delta_p\gamma h'_i\delta_p^{-1})=\psi(\delta_p)\sum_i\xi(\delta_p\gamma\delta_p^{-1})=(p+1)\xi|_{\eta_p}(\gamma)$.  Then we get $\Res_{\Phi_0(N)/\Phi_0(Np)}(T_p\xi)=(p+1)\xi|_{\eta_p}$. By  Proposition \ref{prop:annullatori}b) $T_p^2-\psi(p)(p+1)^2$ annihilates $\xi$, so it must belong to $\mm$ unless $\xi=0$. But $\mm$ is non Eisenstein, so it cannot contain $T_p^2-\psi(p)(p+1)^2$ for almost every $p$. Therefore $\xi$ must be zero.

 \end{itemize}
\end{proof}

Let us consider the amalgamated product:
\begin{equation}\label{amalgama}
\Phi:=\Phi_0(N)\ast_{\Phi_0(Nq)}\delta_q^{-1}\Phi_0(N)\delta_q.
\end{equation}
The following proposition describes the structure of $\Phi$:

\begin{Propo1}
Let $R(N)$ be an Eichler order of level $N$ in $B$ and let $q$ be a prime not dividing $N$.  Then $\Phi=\left(R(N)\otimes_\ZZ\ZZ\left[\frac{1}{q}\right]\right)^{(1)}$.

\end{Propo1}


\begin{proof}
If $q$ is a prime number not dividing $\Delta N$ then  $R_q(N)^{(1)}\cong SL_2(\ZZ_q)$ and, by \cite{Serre80} p.79 $$R_q(N)^{(1)}\ast_{R_q(Nq)^{(1)}}\delta_q^{-1} R_q(N)^{(1)}\delta_q\cong B_q^{(1)}.$$ Since $\Phi_0(N)\cap R_q(Nq)^{(1)}=\delta_q^{-1}\Phi_0(N)\delta_q\cap R_q(Nq)^{(1)}=\Phi_0(Nq)$, by Proposition 3. \S1.3 of \cite{Serre80}, there is an injection $\Phi\hookrightarrow B_q^{(1)}.$ The universal property of the amalgamated product then assures that there is an inclusion $\Phi\subseteq  \left(R(N)\otimes_\ZZ\ZZ\left[\frac{1}{q}\right]\right)^{(1)}$. By strong approximation, $\Phi_0(N)$ is dense in $R_q(N)^{(1)}$ with respect to the $q$-adic topology, so that $\Phi$ is dense in $B_q^{(1)}$, and therefore in $\left(R(N)\otimes_\ZZ\ZZ\left[\frac{1}{q}\right]\right)^{(1)}$. But $\Phi_0(N)=\SL_2(\ZZ_q)\cap \left(R(N)\otimes_\ZZ\ZZ\left[\frac{1}{q}\right]\right)^{(1)}$, so that $\Phi$ is also open (and thus closed) in $\left(R(N)\otimes_\ZZ\ZZ\left[\frac{1}{q}\right]\right)^{(1)}$. This implies $\Phi=\left(R(N)\otimes_\ZZ\ZZ\left[\frac{1}{q}\right]\right)^{(1)}$. \end{proof}

If the amalgama $\Phi$ has the subgroup congruence property, then the same approach employed by Ribet and Diamond in the classical case is applicable to the quaternionic case:

\begin{theorem} Assume that $\Phi$ has the subgroup congruence property. Then Conjecture \ref{noscon} is true.
\end{theorem}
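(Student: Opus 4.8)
The plan is to deduce the injectivity of $\alpha_\mm\otimes k$—which, by the Nakayama reduction explained above, is equivalent to Conjecture \ref{noscon}—from the long exact sequence attached to the amalgam $\Phi$ of (\ref{amalgama}). First I would feed the decomposition $\Phi=\Phi_0(N)\ast_{\Phi_0(Nq)}\delta_q^{-1}\Phi_0(N)\delta_q$ into the Mayer--Vietoris (Lyndon) sequence for an amalgamated product with coefficients in $k(\tp)$ (\cite{Serre80}), whose relevant segment reads
$$H^1(\Phi,k(\tp))\stackrel{r}{\to}H^1(\Phi_0(N),k(\tp))\oplus H^1(\delta_q^{-1}\Phi_0(N)\delta_q,k(\tp))\stackrel{\alpha}{\to}H^1(\Phi_0(Nq),k(\tp)).$$
Transporting the second summand back to $H^1(\Phi_0(N),k(\tp))$ through the conjugation isomorphism identifies the edge map with $\Res\oplus|_{\eta_q}$, i.e.\ with the $\alpha$ of (\ref{eq:mappaalpha}); exactness then gives $\ker\alpha=\Im r$. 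Since localisation at $\mm$ is exact and the Hecke operators act compatibly throughout by the propositions above, it remains to prove that the image of $H^1(\Phi,k(\tp))$ inside the localised module vanishes.

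The heart of the argument is to show that every class coming from $H^1(\Phi,k(\tp))$ dies after restriction to a suitable congruence subgroup, whence Proposition \ref{prop:iniettivitarestr}(b) forces it to vanish in the non-Eisenstein localisation. Concretely, given $(x,y)\in\ker(\alpha_\mm\otimes k)$ I would write $x=c|_{\Phi_0(N)}$ for some $c\in H^1(\Phi,k(\tp))$ and restrict further to $\Psi=\Phi(U(M))$, choosing the level $M$ divisible by $N\ell$ (so that $\tp$ is trivial on $\Psi$) but \emph{prime to $q$}. The coprimality to $q$ is exactly what makes $\Psi$ simultaneously a congruence subgroup of $\Phi_0(N)$ and of the $q$-arithmetic group $\Phi=(R(N)\otimes_\ZZ\ZZ[1/q])^{(1)}$. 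At this point the subgroup congruence property enters: it identifies the restriction $H^1(\Phi,k(\tp))\to H^1(\Psi,k(\tp))=\Hom(\Psi,k)$ with a congruence datum, and since the congruence quotients of $\Psi$ are built from the (almost) perfect groups $\SL_2(\ZZ/M'\ZZ)$ they carry no nontrivial $\FF_\ell$-abelianisation; hence $\Hom(\Psi,k)=0$ and $\Res_{\Phi_0(N)/\Psi}(x)=c|_\Psi=0$. Proposition \ref{prop:iniettivitarestr}(b) now yields $x_\mm=0$, and the same reasoning through the other vertex gives $y_\mm=0$, so that $\alpha_\mm\otimes k$ is injective.

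I expect the delicate point to be precisely the cohomological translation of the hypothesis, namely the vanishing $\Hom(\Psi,k)=0$: one must know that the congruence completion of $\Psi$ has trivial $\FF_\ell$-abelianisation, which is clear for $\ell\ge 5$ and $M'$ avoiding very small primes, but requires care for $\ell=3$ and for the anomalous factors $\SL_2(\FF_2),\SL_2(\FF_3)$. Here I would invoke the level-raising device already mentioned (adding an auxiliary prime $s$ to kill elliptic elements and to push the relevant congruence level away from these bad primes), which does not disturb the local Hecke component. A secondary check is the compatibility at $q$—that the two vertex restrictions in the Lyndon sequence genuinely match $\Res$ and $|_{\eta_q}$, together with the twist by $\ws$ built into the definition of $|_{\eta_q}$—and the verification that a kernel class lifts to an honest $\Phi$-class before localising. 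Should the direct vanishing of $\Hom(\Psi,k)$ be unavailable at a small prime, the fallback is to run the Hecke argument instead: the relation $\Res(x)=y|_{\eta_q}$ propagates, through the congruence description supplied by the subgroup congruence property, to the analogous relation at every prime $p\nmid MN\Delta$, and the Corollary to Proposition \ref{prop:annullatori} then places the annihilator of $x$ in an Eisenstein ideal, contradicting the non-Eisenstein choice of $\mm$ unless $x_\mm=0$.
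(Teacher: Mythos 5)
Your first paragraph (the Lyndon sequence for the amalgam (\ref{amalgama}) and the identification of its edge map with the $\alpha$ of (\ref{eq:mappaalpha})) is exactly the paper's starting point, but the central step of your second paragraph fails: the claim $\Hom(\Psi,k)=0$ for $\Psi=\Phi(U(M))$ is false. Since $\Delta\not=1$ and elliptic elements have been excluded, $\Psi$ is a torsion-free cocompact Fuchsian group, i.e.\ the fundamental group of a compact Riemann surface of genus $g\geq 2$, so $\Hom(\Psi,k)\cong k^{2g}\not=0$. You are conflating the abelianisation of the discrete group $\Psi$ (which is $\ZZ^{2g}$) with the abelianisations of its finite congruence quotients built from $\SL_2(\ZZ/M'\ZZ)$; the congruence subgroup property never asserts that classes in $H^1(\Phi,k(\tp))$ die on a \emph{fixed, pre-chosen} congruence subgroup. (Note also that $\Phi(U(M))$, having infinite index in the $q$-arithmetic group $\Phi$, is not a congruence subgroup of $\Phi$ in the relevant $\Sigma$-arithmetic sense; only its $\ZZ[1/q]$-analogue is.) Consequently your worries about $\ell=3$, $\SL_2(\FF_2)$, $\SL_2(\FF_3)$ and the level-raising device address a claim that cannot be repaired.

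The paper uses the congruence subgroup property in the opposite direction, and this is the idea missing from your proposal. Because the coefficient module $k(\tp)$ is finite, the class $z\in H^1(\Phi,k(\tp))$ produced by the Lyndon sequence automatically restricts to zero on \emph{some} finite-index subgroup $\Psi'\subseteq\Phi$: on the finite-index subgroup where the action is trivial, a cocycle representing $z$ is a homomorphism into the finite group $k$, and one takes its kernel. The hypothesis of the theorem then says precisely that this $\Psi'$ is a congruence subgroup, so $x$ restricts to zero on the congruence subgroup $\Psi=\Psi'\cap\Phi_0(N)$ of $\Phi_0(N)$ --- whose level depends on $z$ and cannot be prescribed in advance. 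The vanishing of $x$ itself is then a Hecke-theoretic, not a topological, fact: Proposition \ref{prop:iniettivitarestr}(b) states that restriction to any congruence subgroup is injective on the non-Eisenstein localisation (its proof derives $\Res_{\Phi_0(N)/\Phi_0(Np)}(T_p\xi)=(p+1)\xi|_{\eta_p}$ and invokes Proposition \ref{prop:annullatori} to place the annihilator of $\xi$ in an Eisenstein ideal). This gives $x=0$; then $y|_{\eta_q}=0$, and applying $U_q$ via (\ref{r}) gives $\Res_{\Phi_0(N)/\Phi_0(Nq)}(y)=0$, hence $y=0$ by the same proposition. Your ``fallback'' Hecke argument gestures at this mechanism but presupposes, without proof, that the hypothesis (\ref{eq:ipot}) of Proposition \ref{prop:annullatori} propagates to all primes $p$; once one knows that $x$ dies on a congruence subgroup, the fallback is simply Proposition \ref{prop:iniettivitarestr}(b), and the false vanishing $\Hom(\Psi,k)=0$ is never needed.
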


\begin{proof}It suffices to show that if $(x,y)$ is in the kernel of $\alpha_\mm$ then $x$ restricts to zero in some congruence subgroup of $\Phi_0(N)$. If this is the case, we would have $y|\eta_q=0$, and applying the operator $U_p$ this yields $\Res_{\Phi_0(N)/\Phi_0(Nq)}(y)=0$, and this implies $x=0,y=0$ by Proposition \ref{prop:iniettivitarestr}.b).
We can consider the  the Lyndon exact sequence
$$H^1(\Phi,k(\tilde\psi))\longrightarrow H^1(\Phi_0(N),k(\tilde\psi))^2\buildrel {\alpha\otimes k} \over\longrightarrow H^1(\Phi_0(Nq),k(\tilde\psi))$$
\cite[II,2.8]{Serre80}. Then $(x,y)$ comes from an element $z\in H^1(\Phi,k(\tilde\psi))$. Choose a subgroup $\Psi'$ of finite index in $\Phi$ such that $z$ restricts to zero in $\Psi'$. By our assumption, $\Psi'$ is a congruence subgroup, so that $x$ restricts to zero in $\Psi=\Psi'\cap\Phi_0(N)$ which is a congruence subgroup of $\Phi_0(N)$.
\end{proof}

\noindent Unfortunately the congruence subgroup property for $q$-arithmetic subgroups of $B^\s$ is conjectured, but  not known. A complete reference about the  present status of the congruence subgroup problem is \cite{Rapinchuk99}.

Let $\Sigma$ be the set of places $\{q,\infty\}$ with $q$ a prime number as in the previous sections; we observe that  $\Sigma$ does not contain any anisotropic nonarchimedean place for $B^{(1)}$.
Let us consider on $B^{(1)}$ the $\Sigma$-arithmetic topology $\tau_a$ (generated by all subgroups of finite index in $\Phi$) and $\tau_c$ the $\Sigma$-congruence topology (generated by the $\Sigma$-congruence subgroups in $\Phi$)
(see, for example \cite{Serre67}). We consider the natural continuous homomorphism $\pi:(\overline{B^{(1)}},\tau_a)\to(\overline{B^{(1)}},\tau_c)$ where  $(\overline{B^{(1)}},\tau_a)$ and $(\overline{B^{(1)}},\tau_c)$ are the completions of $B^{(1)}$ with respect to $\tau_a$ and $\tau_c$. The kernel of $\pi$ is the so called $\Sigma$-congruence kernel, denoted $C(\Sigma)$ and its size measures deviation from the affirmative solution of the congruence subgroup problem.

\noindent Since the normal subgroup structure of $B^{(1)}$ is given by  the Platonov-Margulis conjecture  (as proved in \cite{Segev99}), by Theorem 1 of \cite{Rapinchuk99}, if  $C(\Sigma)$ is central then   $\Phi$ has the congruence subgroup property. Thus,  Conjecture \ref{noscon} would be a consequence of  the following:

\begin{conjecture} The $\Sigma$-congruence kernel $C(\Sigma)$ is central. \end{conjecture}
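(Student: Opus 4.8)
The plan is to view this as an instance of the congruence subgroup problem in higher $\Sigma$-rank and to attack it with the machinery developed by Serre, Raghunathan and Rapinchuk. First I would record the structural facts that place us in the favourable range. The group $G=B^{(1)}=\SL_1(B)$ is simply connected and absolutely almost simple over $\QQ$, being an inner form of $\SL_2$. Since $B$ is indefinite we have $G_\infty\cong\SL_2(\RR)$, of real rank $1$, and since $q\not|\Delta$ we have $G_q\cong\SL_2(\QQ_q)$, of $\QQ_q$-rank $1$; hence $\sum_{v\in\Sigma}{\rm rank}_{\QQ_v}(G)=2$. Equivalently, the amalgam $\Phi=\left(R(N)\otimes_\ZZ\ZZ\left[\frac1q\right]\right)^{(1)}$ of (\ref{amalgama}) is, up to commensurability, an irreducible lattice in the rank-$2$ group $\SL_2(\QQ_q)\s\SL_2(\RR)$. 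This is precisely the regime in which Serre's conjecture predicts that $C(\Sigma)$ is central, indeed finite.

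Granting centrality, I would then pin down $C(\Sigma)$ through the metaplectic kernel: by the theory of topological central extensions of $\Sigma$-adic groups (Moore, in the form of Prasad--Rapinchuk), a central $C(\Sigma)$ is canonically dual to the metaplectic kernel $M(\Sigma,G)$, which embeds into the finite group of roots of unity of $\QQ$ and can be computed from purely local data. Thus the whole weight of the problem rests on proving centrality; once that is in hand, finiteness and the explicit shape of $C(\Sigma)$ follow, and combining this with the already-known normal subgroup structure of $B^{(1)}$ (Platonov--Margulis, \cite{Segev99}) through Theorem 1 of \cite{Rapinchuk99} yields the full congruence subgroup property for $\Phi$, hence Conjecture \ref{noscon}.

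The main obstacle is the anisotropy of $G$: because $B$ is a division algebra, $G$ has no nontrivial $\QQ$-rational unipotent elements, so the Bass--Milnor--Serre method and Raghunathan's proof of centrality by bounded generation by unipotent $\QQ$-subgroups do not apply. My way around this is to exploit the local isotropy at the two places of $\Sigma$: although $G$ is globally anisotropic, $G_q\cong\SL_2(\QQ_q)$ and $G_\infty\cong\SL_2(\RR)$ are split of rank $1$, and by the structure result above $\Phi$ is dense in $B_q^{(1)}\cong\SL_2(\QQ_q)$. I would therefore transport the unipotent one-parameter subgroups of $\SL_2(\QQ_q)$ into the analysis and adapt Serre's proof that $\SL_2(\OO_\Sigma)$ has the congruence subgroup property whenever $|\Sigma|\geq 2$ \cite{Serre970}, working with the action of $\Phi$ on the Bruhat--Tits tree of $\SL_2(\QQ_q)$.

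The step I expect to resist a soft argument is the passage from control of $C(\Sigma)$ as a profinite group to its centrality for this anisotropic $G$. Here I would invoke higher-rank rigidity for the irreducible lattice $\Phi\subset\SL_2(\QQ_q)\s\SL_2(\RR)$: using the amalgam decomposition (\ref{amalgama}) together with the density of $\Phi$ in $\SL_2(\QQ_q)$, one aims to show that every normal subgroup of the completion $(\overline{B^{(1)}},\tau_a)$ not contained in $C(\Sigma)$ is open, so that by the Platonov--Margulis normal subgroup theorem $C(\Sigma)$ can only be central. Making this rigidity argument genuinely internal to the anisotropic group $B^{(1)}$, rather than borrowing it wholesale from an isotropic ambient group, is where the real difficulty lies, and it is the reason the centrality of $C(\Sigma)$ is stated here only as a conjecture.
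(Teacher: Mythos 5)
The statement you are addressing is precisely the point where the paper stops: it is stated as a conjecture, the authors explicitly noting (following \cite{Rapinchuk99}) that the centrality of $C(\Sigma)$ ``still does not have a uniform approach'', and that the only sufficient condition they can point to --- bounded generation of $\Phi$, via Theorem 5 of \cite{Rapinchuk99} --- is itself only conjectural. So the paper contains no proof, and your proposal does not supply one either: it is a research program in which every stage defers the difficulty rather than resolving it. Concretely: (i) placing $\Phi$ in the higher-rank regime $\sum_{v\in\Sigma}{\rm rank}_{\QQ_v}B^{(1)}=2$ only says that Serre's conjecture \emph{predicts} centrality; (ii) the metaplectic-kernel description of Prasad--Rapinchuk explicitly \emph{presupposes} centrality, so it cannot help prove it; (iii) the attempt to adapt Serre's proof for $\SL_2(\OO_\Sigma)$ \cite{Serre970} breaks down at the very first step, because $B$ is a division algebra and hence $B^{(1)}$ contains no nontrivial unipotent element at all (if $u$ is unipotent then $u-1$ is nilpotent, hence zero). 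Density of $\Phi$ in $\SL_2(\QQ_q)$ puts unipotents in the \emph{closure} of $\Phi$, not in $\Phi$, and all known centrality proofs in the isotropic case (Bass--Milnor--Serre, Serre, Raghunathan) rest on actual rational unipotents or on bounded generation by them.

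The final step of your plan is where the circle closes: you propose to show that every normal subgroup of $(\overline{B^{(1)}},\tau_a)$ not contained in $C(\Sigma)$ is open, and to conclude centrality from Platonov--Margulis. But that openness statement is essentially the congruence subgroup property itself in different clothing --- it is the conjecture restated, not a reduction of it --- and the implication ``openness of noncentral normal subgroups $\Rightarrow$ centrality of $C(\Sigma)$'' is not justified as stated. What Theorem 1 of \cite{Rapinchuk99} together with \cite{Segev99} gives, and what the paper already uses, is the other direction: centrality \emph{plus} the Margulis--Platonov normal subgroup structure implies the congruence subgroup property for $\Phi$. Your own last sentence concedes that making the rigidity argument internal to the anisotropic group ``is where the real difficulty lies'' and is why the statement is only a conjecture; that concession is accurate. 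What you have written is a correct and well-informed map of the obstructions, consistent with the paper's own discussion, but no step in it closes the gap, so it is not a proof.
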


As pointed out in \cite{Rapinchuk99}, this last open problem still does not have a uniform approach.


\noindent By Theorem 5 of \cite{Rapinchuk99}, if $\Phi$ has the bounded generation (BG) (i.e. there are elements $\gamma_1,...,\gamma_t\in\Phi$ such that $\Phi=\langle\gamma_1\rangle...\langle\gamma_t\rangle$ where $\langle\gamma_i\rangle$ is the cyclic subgroup generated by $\gamma_i$), then the $\Sigma$-congruent kernel is central. Unfortunately (BG) is not established for $\Phi$ but only conjectured \cite{Rapinchuk99}.  \\
Our optimism is renforced by the fact that the problem of showing sufficient conditions for the centrality of the $\Sigma$-congruence kernel for general algebraic groups, is the subject of research of many mathematicians, see for example \cite{PrasadRapinchuk08}, \cite{RapinPra}, \cite{PraRag}.

\section{Some consequences of the conjecture}\label{sect:conseguenze}

By Jacquet-Langlands correspondence and Matsushima-Shimura isomorphism \cite{MatsushimaShimura63}, some modular forms can be reinterpreted as elements of the cohomology of Shimura curves (for details see \cite{MoriTerracini98}, \cite{Terracini03}). Therefore one can use quaternionic cohomological modules in the construction of Taylor-Wiles systems for certain deformation problems. This has be done for example in \cite{Terracini03}, \cite{miri2006}.\\
In this section we will describe two interesting consequences of  Conjecture \ref{noscon}.

\noindent Let $S$ be a finite set of rational primes which does not divide $N\Delta$.
Keeping the same notations as in the previous sections, let $S_2(V_1(N,S))$ be the space of quaternionic automorphic forms  right invariant by $V_1(N,S)$ \cite{Hida88}; let  $S_2(V_0(NS),\widehat\psi)$ be the subspace of $S_2(V_1(N,S))$ consisting of the  $\varphi$ such that $$\varphi(gk)=\widehat\psi(k)\varphi(g)\ \ \ {\rm for\ any\ }k\in V_0(NS),\ g\in B_\A^\s.$$ The Jacquet-Langlands correspondence establishes an isomorphism $$JL:S_2(V_0(NS),\widehat\psi)\tilde{\to}V_{\widehat\psi}$$ where $V_{\widehat\psi}$ is a direct summand of $S_2(\Gamma_0(S\Delta')\cap\Gamma_1(N\ell^2))$ consisting of classical modular forms of level 2, nebentypus $\psi$, special at primes dividing $\Delta'$ and supercuspidal of type $\tau=\chi\oplus\chi^\sigma$ at $\ell$.
By the Matsushima-Shimura isomorphism,  one can see such forms  in the $\ell$-adic cohomology of a  Shimura curve, as we shall show in details in the following.

\subsection{The Hecke structure of the cohomology of Shimura curves}
\noindent Let us fix $f\in S(\Gamma_0(N\Delta'\ell),\psi)$ a modular newform of weight 2, level $N\Delta'\ell $, supercuspidal of type $\tau$ at $\ell$, special at primes dividing $\Delta'$  and with Nebentypus $\psi$ of order prime to $\ell$.  Let $\rho$ be the Galois representation associated to $f$ and let $\orho$ be its reduction modulo $\ell$; we impose the following conditions on $\orho$:
 \begin{equation}\label{con1}
\orho\ {\rm is\ absolutely\ irreducible};
\end{equation}
\begin{equation}\label{cond2}
{\rm if}\ p|N\ {\rm then}\ \orho(I_p)\not=1;
\end{equation}
\begin{equation} \label{rara}
{\rm if}\ p|\Delta'\ {\rm and}\ p^2\equiv 1\ \modulo\ \ell\ {\rm then}\ \orho(I_p)\not=1;
\end{equation}
\begin{equation}\label{end}
\End_{\overline\FF_\ell[G_\ell]}(\orho_\ell)=\overline\FF_\ell.
\end{equation}
\begin{equation}\label{c3}
{\rm if}\ \ell=3,\ \orho\ \ {\rm is\ not\ induced\ from\ a\ character\ of\ }\ \QQ(\sqrt{-3}).
\end{equation}
\noindent We denote by $\Delta_1$ the product of primes $p|\Delta'$ such that $\orho(I_p)\not=1$ and by $\Delta_2$ the product of primes $p|\Delta'$ such that $\orho(I_p)=1$ so that we are assuming that $p^2\not\equiv 1\ \modulo\ \ell$ if $p|\Delta_2$. We say that the representation $\rho$ is of {\bf type $(S,{\rm sp},\tau,\psi)$} if $\rho$ is a deformation of $\bar\rho$ and the following conditions hold:
\begin{itemize}
\item[a)] $\rho$ is unramified outside $N\Delta S$.
\item[b)] if $p|\Delta_1N$ then $\rho(I_p)\simeq\orho(I_p)$ ;
\item[c)] if $p|\Delta_2$ then $\rho_p$ satisfies the condition $\tr(\rho(F))^2=\psi(p)(p+1)^2$ for a lift $F$ of $\Frob_p$ in $G_p$;
\item[d)] $\rho_\ell$ is weakly of type $\tau$ (see \cite{ConradDiamondTaylor99});
\item[e)] $\det(\rho)=\epsilon\psi$, where $\epsilon:G_\QQ\to\ZZ_\ell^\s$ is the cyclotomic character.
\end{itemize}

\noindent As proved in \cite{Terracini03} or \cite{miri2006}, this is a deformation condition. Let $\mathcal R_S$ be the universal deformation ring parametrizing representations of type $(S,{\rm sp},\tau,\psi)$ with residual representation $\orho$.

We say that a modular eigenform is of {\rm type} $(S,sp,\tau,\psi)$ if the Galois representation associated $\rho_f$ is of type $(S,sp,\tau,\psi)$.

Let $\mathcal{B}_S$ denote the set of newforms $h$ of level dividing $N\Delta' S \ell^2$ which are of type $(S,sp,\tau,\psi)$ and for each $h\in \mathcal{B}_S$ let $\OO_h$ be the extension of $\OO$ generated by the Hecke eigenvalues of $h$.  Let $\TT^\psi_S$ be the Hecke algebra acting on the space of modular forms of type $(S,{\rm sp},\tau,\psi)$. Let $\TT_S$ denote the sub-$\OO$- algebra of $\prod_{h\in \mathcal{B}_S} \OO_h$ generated by the elements $T_p=(a_p(h))_{h\in \mathcal{B}_S}$ for $p\not | N\Delta 'S\ell$. Similarly to \cite[Proposition 3.3]{Terracini03}, we can see that there is a surjective homomorphism $\mR_S\to \TT_S$. By combining the Jacquet-Langlands correspondence with the discussion in Section 4.2 of [DDT], we know that $\TT_S$ is isomorphic to the localization in a maximal ideal $\mm_S$ of the full Hecke algebra $\TT^{\hat\psi}(N,S)$ acting on $H^1(\XX_1(N,S),\OO)^{\hat\psi}$, and that $\mm_S$ lies over $\mm=\mm_\emptyset$. We define $\MM_S=H^1(\XX_1(N,S),\OO)^{\hat\psi}_{\mm_S}$.
As explained in Section \ref{Sect:duality} each $\MM_S$ is endowed by a $\TT_S$-bi-linear, $\OO$-perfect pairing $\langle\cdot,\cdot\rangle_S$.

For simplicity, the following application of our main conjecture is stated for sets $S$ of primes not congruent to $\pm 1$ modulo $\ell$:
\begin{theorem}\label{conj2}
Suppose that Conjecture \ref{noscon} is true. Let $S$ be a set of primes $q$ not dividing $N\Delta$ such that $q^2\not\equiv 1\  \modulo\ \ell$. Then
\begin{enumerate}
\item $\mathcal R_S\to \TT_S$ is an isomorphism of complete intersection rings;
\item $\mathcal M_S$ is a free $\TT_S$-module.
\end{enumerate}
\end{theorem}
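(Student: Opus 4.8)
\noindent The plan is to argue by induction on the cardinality of $S$, using Wiles' numerical criterion for complete intersections (in the form given in \cite{deSmitLenstra97}) and then deducing freeness from the perfect pairing $\langle\cdot,\cdot\rangle_S$. The base case $S=\emptyset$ is the minimal case, where $\mR_\emptyset\to\TT_\emptyset$ is already known to be an isomorphism of complete intersection rings and $\MM_\emptyset$ is free, by the Taylor--Wiles systems of \cite{Terracini03, miri2006}. For the inductive step I would add a single prime $q$ with $q\not|N\Delta$ and $q^2\not\equiv1\ \modulo\ \ell$, set $S'=S\cup\{q\}$, and show that the conclusion for $S$ propagates to $S'$. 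Throughout one works with the augmentation $\mR_{S'}\to\TT_{S'}\to\OO$ attached to $f$, letting $\wp_{S'}$ be the kernel of the composite and $\eta_{S'}\subseteq\OO$ the associated congruence ideal, and recalling that Wiles' criterion yields that $\mR_{S'}\to\TT_{S'}$ is an isomorphism of complete intersections as soon as the $\OO$-length of the cotangent space $\wp_{S'}/\wp_{S'}^2$ does not exceed the $\OO$-length of $\OO/\eta_{S'}$ (the reverse inequality holding in general).

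On the deformation-theoretic side I would bound the growth of the cotangent space when the local condition at $q$ is relaxed. By the Greenberg--Wiles local-to-global Euler characteristic formula this growth is controlled, from above, by the local term $H^1_{/{\rm ur}}(\QQ_q,{\rm ad}^0\orho)$, which by local Tate duality has the same length as $H^0(\QQ_q,{\rm ad}^0\orho(1))$; call this local factor $L_q$. The hypothesis $q^2\not\equiv1\ \modulo\ \ell$ is exactly what keeps this computation in the clean regime, separating the admissible local deformations at $q$ from the degenerate cases and making $L_q$ explicit in terms of the eigenvalues of $\Frob_q$ on ${\rm ad}^0\orho$. The conclusion of this step is the upper bound: the $\OO$-length of $\wp_{S'}/\wp_{S'}^2$ exceeds that of $\wp_S/\wp_S^2$ by at most $L_q$.

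The matching lower bound on the Hecke side is where Conjecture \ref{noscon} is indispensable. I would decompose $\MM_{S'}$ into its $q$-old part, i.e. the image of the degeneracy map $\alpha$ of (\ref{eq:mappaalpha}), and its $q$-new quotient, and then compute the growth of the congruence ideal $\OO/\eta_{S'}$ relative to $\OO/\eta_S$. The injectivity of $\alpha\otimes k$ supplied by the conjecture guarantees that the old part injects modulo $\ell$, so that it does not collapse and contribute spurious congruences; simultaneously the action of $U_q$, governed by the relations of Proposition \ref{prop:annullatori} (in particular the Eichler--Shimura relation $U_q^2-T_qU_q+q\psi(q)=0$ on the old subspace), pins down the $q$-new contribution. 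Together these show that the congruence ideal grows by at least the same local factor $L_q$. Feeding both estimates into the numerical criterion, and using the equality of lengths already available at the base level $S$, forces equality at $S'$; hence $\mR_{S'}\to\TT_{S'}$ is an isomorphism of complete intersections, and the $\TT_{S'}$-bilinear, $\OO$-perfect pairing $\langle\cdot,\cdot\rangle_{S'}$ then identifies $\MM_{S'}$ with its $\OO$-dual and yields freeness over $\TT_{S'}$ by the standard module-theoretic refinement of the criterion.

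The principal obstacle is precisely this Hecke-side congruence computation: aligning the growth of $\eta_{S'}/\eta_S$ with the Galois local factor $L_q$ requires a careful analysis of how the $q$-old and $q$-new subspaces sit inside $\MM_{S'}$ relative to the perfect pairing, and it is only the torsion-freeness of $\coker\alpha$---equivalently the mod-$\ell$ injectivity $\alpha\otimes k$---that prevents the old part from degenerating and manufacturing extra congruences that would break the equality. By contrast, the local Galois cohomology bound and the bookkeeping with $U_q$ are a routine adaptation of the Ribet--Wiles--Darmon--Diamond--Taylor level-raising arguments to the quaternionic cohomology groups $H^1(\Phi_0(NS),k(\ts))$ considered here.
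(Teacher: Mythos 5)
Your overall skeleton --- induction on $|S|$, the Taylor--Wiles base case from \cite{Terracini03, miri2006}, a numerical criterion for complete intersections, and Conjecture \ref{noscon} supplying the Hecke-side input --- is the same as the paper's. But two concrete steps fail as written. First, your local Galois bound is wrong: you bound the growth of the cotangent space by $L_q=\mathrm{length}_\OO\, H^0(\QQ_q,{\rm ad}^0\orho(1))$ computed from the \emph{residual} representation. Under the running hypotheses this is a $k$-vector space of dimension at most one, whereas the actual kernel $K=\ker(\gp_{S'}/\gp_{S'}^2\to\gp_S/\gp_S^2)$ is isomorphic to $\OO/(a_q(f)^2-\psi(q)(q+1)^2)$, whose length can be arbitrarily large. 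The paper gets this integral computation from the versal local deformation ring at $q$ (which is $\OO[[X,Y]]/(XY)$ when the congruence $a_q(f)^2\equiv\psi(q)(q+1)^2\ \modulo\ \lambda$ holds, with $X$ corresponding up to units to $\tr(\rho(\Frob_q))^2-\psi(q)(q+1)^2$). Your matching of ``upper bound $\le L_q$'' against ``lower bound $\ge L_q$'' would therefore produce a spurious equality whenever the level-raising congruence holds modulo $\lambda^2$; to fix it you must work with ${\rm ad}^0\rho_f\otimes K/\OO$ (or directly with versal rings), not with $\orho$.

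Second, you never confront the dichotomy on the roots $a,b$ of $X^2-T_qX+q\psi(q)$ modulo $\mm_S$, which is where the hypothesis $q^2\not\equiv 1\ \modulo\ \ell$ actually does its work --- it is not just bookkeeping hygiene. When $a\neq b$, the paper Hensel-lifts a root $A\in\TT_S$ and replaces the two-variable Ihara map by the one-variable twisted map $\tilde\alpha(x)=\Res(Ax)-x|_{\eta_q}$, precisely so that the composition $\tilde\beta\circ\tilde\alpha$ with its transpose under $\langle\cdot,\cdot\rangle_S$, $\langle\cdot,\cdot\rangle_{S'}$ is multiplication by the \emph{scalar} $c=\psi^{-1}(q)A^{-1}(A^2-q\psi(q))(A^2-\psi(q))$, computed via Lemma \ref{lem:dualita}; the hypothesis guarantees $a\neq qb$, so $(A^2-q^2\psi(q))$ is a unit and $v_\lambda(c)=v_\lambda\bigl(\pi_{S'}(T_q^2-\psi(q)(q+1)^2)\bigr)$, exactly matching $\mathrm{length}_\OO(K)$. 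Your old/new decomposition with $U_q$ does not make $\beta\circ\alpha$ scalar, so the module-theoretic criterion you invoke at the end --- which is precisely Diamond's Theorem 2.4 of \cite{Diamond97}, requiring an injective map with torsion-free cokernel and this scalar $c$ --- does not apply off the shelf. When $a=b$, the hypothesis forces $\mathcal{B}_{S'}=\mathcal{B}_S$ and $\TT_{S'}=\TT_S$, the conjecture makes $\tilde\alpha:\MM_S^2\to\MM_{S'}$ an isomorphism, and $\MM_{S'}$ is free of rank $4$ rather than $2$; the rank $d$ in the numerical criterion doubles and the lengths must be rebalanced ($\Omega_{S'}\simeq\Omega_S^2$), so a uniform argument keeping $d$ fixed, as yours implicitly does, breaks in this case.
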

The proof relies over a theorem of Diamond \cite[Theorem 2.4]{Diamond97}, that we briefly illustrate below.\par

We assume that $\OO$ is a complete discrete valuation ring with uniformizer $\lambda$ and residue field $k$. Fix complete local noetherian $\OO$-algebras $R,R'$ with $\OO$-algebra homomorphisms $\varphi:R'\to R$, $\pi:R\to \OO$; define $\pi'=\pi\circ \varphi$.\par
Let $M$ (resp. $M'$) be an $R$ (resp. $R'$) module, finitely generated and free over $\OO$; we can view $M$ as an $R'$-module via $\varphi$. Suppose that there exists an injective $R'$-homomorphism $\alpha:M\to M'$  with cokernel torsion free. Define $T=R/Ann_R(M), T'=R'/Ann_R'(M')$; then $\varphi$ induces a surjective map $T'\to T$.\par
Suppose that $\gp=\ker(\pi)$ is in the support of $M$ and let $\gp'=\varphi^{-1}(\gp)$.
Let $I$ (resp. $I'$) be the annihilator of $\gp$ (resp. $\gp'$) in $T$ (resp. $T'$). Then $\alpha$ induces an isomorphism $M[\gp]\simeq M'[\gp']$, which are both free $\OO$-modules of finite rank $d$. Define
\begin{eqnarray*}
\Omega &=& \frac {M}{M[\gp]+ M[I]}\\
\Omega' &=& \frac {M'}{M'[\gp']+ M'[I']}
\end{eqnarray*}

\noindent According to \cite[Theorem 2.4]{Diamond97}, if $\Omega$ has finite lenght over $\OO$, then the following statements are equivalent:
 \begin{itemize}
 \item[i)] $rk_\OO(M)\leq d\cdot rk_\OO(T)$ and $length_\OO(\Omega)\geq d\cdot lenght_\OO(\gp/{\gp^2})$;
 \item[ii)] $R$ is a complete intersection and $M$ is free (of rank $d$) over $R$.
 \end{itemize}

\noindent Diamond theorem allows, in some particular case, to deduce property $ii)$ for $M',R'$ once it is proved for $M,R$. We briefly explain this technique in the following.\\
\noindent Suppose moreover that $M$ (resp. $M'$) is equipped with a $T$ (resp. $T'$) bi-linear, $\OO$-perfect pairing $\langle\cdot,\cdot\rangle$ (resp. $\langle\cdot,\cdot\rangle'$) and let $\beta:M'\to M$ be the transpose of $\alpha$ w.r.t. these pairings. It is easy to see that $\Omega$ (resp. $\Omega'$) is isomorphic to the cokernel of the map $M[\gp]\to \Hom(M[\gp],\OO)$ (resp. $M'[\gp']\to \Hom(M'[\gp'],\OO)$, so that $\Omega$ and $\Omega'$ have finite lenght over $\OO$.
Suppose that $R$ is complete intersection and $M$ is free over $R$. Suppose moreover that
\begin{equation}\label{eq:disu} rk_\OO(\Omega')- rk_\OO(\Omega)\geq d\cdot (lenght_\OO(\gp'/{\gp'^2})-lenght_\OO(\gp/{\gp^2})) \end{equation}
Then by Diamond theorem we can conclude that $R'$ is complete intersection and $M'$ is free over $R'$.

\noindent If $x_1,...,x_d$ is a $\OO$-basis for $M[\gp]$ it is easy to see that
\begin{displaymath}
rk_\OO(\Omega)=v_\lambda\det
\left(\langle x_i,x_j\rangle\right)\end{displaymath}
and the same holds for $\Omega'$ and any basis $x'_1,...,x'_d$ of $M'[\gp']$. Since $\alpha(x_1),...,\alpha(x_d)$ is a basis of $M'[\gp']$, then:
\begin{displaymath}
rk_\OO(\Omega')=v_\lambda\left(\det
\left(\langle\alpha(x_i),\alpha(x_j)\rangle\right)\right)\end{displaymath}
If the composition $\beta\circ\alpha$ over $M[\gp]$ is the multiplication by an element $c$ of $\OO$, we have:

\begin{eqnarray*}
rk_\OO(\Omega')&=&v_\lambda\left(\det
\left(\langle x_i,\beta\circ\alpha(x_j)\rangle\right)\right)\\
&=&v_\lambda\left(\det
\left(\langle x_i,x_j \rangle\right)c^d\right)=d\cdot v_\lambda(c)+rk_\OO(\Omega)\end{eqnarray*}

\noindent On the other hand $\ker(\varphi)\subseteq \gp'$ and its image $K$ in $\gp'/\gp'^2$ is an $\OO$-module which generates the kernel of the map $\gp'/\gp'^2\to \gp/\gp^2$ induced by $\varphi$. Therefore
$$lenght_\OO(\gp'/{\gp'^2})-lenght_\OO(\gp/{\gp^2})= lenght_\OO(K),$$
and (\ref{eq:disu}) will be proved if we can show that
\begin{equation}\label{eq:disu1}
v_\lambda(c)\geq lenght_\OO(K).\end{equation}

\subsection{A duality result on the cohomology of Shimura curves}\label{Sect:duality}

In this section we prove the existence
of a bilinear perfect Hecke-equivariant pairing $\langle \cdot,\cdot \rangle_S$ on $H^1(\XX_0(N,S),\OO(\ws))$; it induces an isomorphism $\mathcal M_S\to\Hom_\OO(\mathcal M_S,\OO)$ as Hecke modules; this is one of the ingredients of the proof of Theorem \ref{conj2} illustrated above.\\

The $\OO-$linear map
$b:\OO(\ws)\otimes_{\OO}\OO(\ws^{-1})\to\OO$
 allows to consider the cup product:
 \begin{eqnarray*}
 b^\s_S:H^1(\XX_0(N,S),\OO(\ws))\otimes_{\OO} H^1(\XX_0(N,S),\OO(\ws^{-1}))&\stackrel{\cup}{\to}&H^2(\XX_0(N,S),\OO))
 \simeq \OO
 \end {eqnarray*}

\begin{Propo1}\label{perfect}
The pairing $b_S^\s$ is perfect.
\end{Propo1}

\proof
It suffices to show that the cup product $$H^1(\XX_0(N,S),k(\ws))\otimes_{\OO} H^1(\XX_0(N,S),k(\ws^{-1}))\to k$$ is non-singular.\\
Under our hypothesis $\Delta\not=1$, the group $\Phi_0(N,S)$ is co-compact in $SL_2(\RR)$ (see for example \cite{Vigneras80}); then by Poincar\'e duality $$H^1(\XX_0(N,S),k(\ws))\simeq H_1(\XX_0(N,S),k(\ws)).$$ This isomorphism identifies cup and cap products; the cap product is non-singular by Theorem 10.5, \S V of \cite{Bredon97}.
\qed


\noindent Let $\omega_\ell$ be an element in $R_\ell^\times$ such that $\omega_\ell^2=\ell$ and $\chi(\omega_\ell\alpha\omega_\ell^{-1})=\chi^{-1}(\alpha)$ for every $\alpha \in R_\ell^\times$. Let us consider the element $\omega_{NS}$ of $B_\A^{\s,\infty}$ defined as follows: $\omega_{NS,\nu}=1$ if $\nu\not|NS\ell$, $\omega_{NS,\ell}=\omega_\ell$, $\omega_{NS,\nu}=i_\nu^{-1}\left(\begin{array}
[c]{cc}%
0 & -1\\
NS & 0
\end{array}
\right)$ if $\nu|NS$.

\noindent By strong approximation, we can write
\begin{equation}\label{eq:decomp} \omega_{NS}=\delta g_\infty u\end{equation}
 with $\delta\in B^\s$, $g_\infty\in GL_2^+(\RR)$ and $u\in V_0(NS)$. We define a map
\begin{eqnarray}
\theta_S: H^1(\Phi_0(NS),\OO(\tp))&\to& H^1(\Phi_0(NS),\OO(\tp^{-1}))\nonumber
\end{eqnarray} as follows: let $\xi$ be a cocycle representing  the cohomology class $x$ in  $H^1(\Phi_0(NS),\OO(\tp))$, then $\theta_S(\xi)$ is represented by the cocycle $\xi'(g)=\ws(u)\xi(\delta g \delta^{-1})$.

\noindent Observe that for all $\gamma\in\Phi_0(NS)$ the character satisfies: $\tp(\delta\gamma\delta^{-1})=\tp(\gamma)^{-1}$. It is immediate to verify that the map $\theta_S$ is a well defined isomorphism of abelian groups.

\noindent Let $g_p\in B^\times$ be such that the double coset $\Phi_0(NS)g_p \Phi_0(NS)$ defines the Hecke operator $T_p$ (see for example \cite{Terracini03}).  Let us define on  $H^1(\Phi_0(NS),\OO(\tp^{-1}))$, $\widetilde T_p$ as  the operator defined by the double coset $$\Phi_0(NS)\delta^{-1} g_p\delta\Phi_0(NS).$$ Notice that for a different decomposition  $\omega_{NS}=\delta' g'_\infty u'$ we have $\delta^{-1}\delta'\in\Phi_0(NS)$, so that there is an equality of double cosets $\Phi_0(NS)\delta^{-1} g_p\delta \Phi_0(NS)=\Phi_0(NS)\delta'^{-1} g_p\delta'\Phi_0(NS)$.

\begin{Lem1}\label{lemmad}
The following equalities hold:
\begin{itemize}
\item[a)] $\theta_S(T_p x)=\widetilde T_p(\theta_S(x))$ for all $p$ and $x\in H^1(\Phi_0(NS),\OO(\tp))$;
\item[b)] $b^\s_S\s(T_p x,y)=b^\s_S(x,\widetilde T_p y)$ for all $x\in H^1(\Phi_0(NS),\OO(\tp)), y\in H^1(\Phi_0(NS),\OO(\tp^{-1})$.
\item[c)] If $p$ does not divide $NS$ then $\widetilde T_p =\psi(p) T_p $\end{itemize}
\end{Lem1}

\proof
$a)$  Decompose $\Phi_0(NS)g_p\Phi_0(NS)= \coprod_i\Phi_0(NS)h_i$, with $h_i\in B^\s$.
Let $\xi$ be a cocycle representing the cohomology class $x$ in $H_1(\Phi_0(NS),\OO(\tp))$.\\
For $\gamma\in\Phi_0(NS)$, write \begin{eqnarray}\label{hi} h_i\delta\gamma\delta^{-1}=\gamma_i h_{j(i)};\end{eqnarray} so:
\begin{eqnarray}
\theta_S(T_p(\xi))(\gamma)&\stackrel{\textnormal{Def. of}\ \theta_S}{=}&\widehat\psi(u)(T_p(\xi))(\delta\gamma\delta^{-1})\nonumber\\
&\stackrel{\textnormal{Def. of}\ T_p}{=}&\widehat\psi(u)\sum_i\psi(h_i)\xi(\gamma_i)\nonumber\\
\end{eqnarray}

\noindent where $\psi(h_i)$ is defined as follows: first we observe that the elements $h_i\in V_0(NS)_q$  for all place $q$, where $V_0(NS)_q$ is the local component at $q$ of $V_0(NS)$. Then $$\psi(h_i)=\prod_{\begin{array}{l}q|N\\ q\not|p\ell \end{array}}\psi_q(h_i^{-1})\s\chi(h_i^{-1}). $$
Since $\delta^{-1} \Phi_0(NS)\delta =\Phi_0(NS)$, we have that $\Phi_0(NS)\delta=\delta \Phi_0(NS)$. Then: \begin{eqnarray}\label{dec}
\Phi_0(NS)\delta^{-1}g_p\delta\Phi_0(NS)&=&\delta^{-1}\Phi_0(NS)g_p\Phi_0(NS)\delta\nonumber\\
&=&\coprod_i\delta^{-1}\Phi_0(NS)h_i\delta\nonumber\\
&=&\coprod_i\Phi_0(NS)\delta^{-1}h_i\delta\label{coset}.
\end{eqnarray}
For $\gamma\in\Phi_0(NS)$ write $$\delta^{-1}h_i\delta\gamma\stackrel{by (\ref{hi})}{=}\delta^{-1}h_i\delta\gamma\delta^{-1}\delta=\delta^{-1}\gamma_ih_{j(i)}\delta=\delta^{-1}\gamma_i\delta\ \delta^{-1}h_{j(i)}\delta$$
\begin{eqnarray}
\widetilde T_p(\theta_S(\xi))(\gamma)&=&\sum_i\psi^{-1}(\delta^{-1}h_i\delta)\theta_S(\xi)(\delta^{-1}\gamma_i\delta)\nonumber\\
&=&\sum_i\psi(h_i)\widehat\psi(u)\xi(\delta\delta^{-1}\gamma_i\delta\delta^{-1})\nonumber\\
&=&\sum_i\psi(h_i)\widehat\psi(u)\xi(\gamma_i)\nonumber \end{eqnarray}

$b)$ Notice that the double coset defining the operator $\tilde T_p$ coincides with $\Phi_0(NS)g_p^\iota\Phi_0(NS)$ where $g_p^\iota=\nu(g_p)g_p^{-1}$. It is well known (see for example \cite[Chapter 6]{Hida93b}) that the operator associated to the double coset $\Phi_0(NS)g_p^\iota\Phi_0(NS)$ is the adjoint of the operator associated to $\Phi_0(NS)g_p\Phi_0(NS)$ under the cup product.\par

\noindent $c)$ See for example \cite{Shimura71} Section 3.4.5.\qed

For $x,y\in H^1(\XX_0(N,S),\OO(\ws))$ we define
$$\langle x,y\rangle_S=b^\s_S(x,\theta_S(y))$$
\begin{theorem}\label{teo.dual}
The pairing $\langle .,.\rangle_S$ on $H^1(\XX_0(N,S),\OO(\ws))$ satisfies the following properties:
\begin{enumerate}
\item it is perfect;
\item it is  Hecke-equivariant.
\end{enumerate}\end{theorem}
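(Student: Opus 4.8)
The plan is to deduce both properties formally from the two facts already in hand: that $b^\s_S$ is perfect (Proposition \ref{perfect}) and that $\theta_S$ interchanges the Hecke action correctly (Lemma \ref{lemmad}). Since $\langle x,y\rangle_S$ is by definition $b^\s_S(x,\theta_S(y))$, the pairing $\langle\cdot,\cdot\rangle_S$ is simply the perfect pairing $b^\s_S$ with its second variable fed through the isomorphism $\theta_S$, and everything should reduce to bookkeeping.

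First I would treat perfectness. The adjoint map of $\langle\cdot,\cdot\rangle_S$, namely $x\mapsto\langle x,\cdot\rangle_S$, factors as the adjoint $x\mapsto b^\s_S(x,\cdot)$ of $b^\s_S$ followed by the map $\Hom_\OO(H^1(\XX_0(N,S),\OO(\ws^{-1})),\OO)\to\Hom_\OO(H^1(\XX_0(N,S),\OO(\ws)),\OO)$ given by precomposition with $\theta_S$. The first map is an isomorphism because $b^\s_S$ is perfect, and the second is an isomorphism because $\theta_S$ is an isomorphism of $\OO$-modules (as recorded just after the definition of $\theta_S$). Hence the composite is an isomorphism, which is exactly perfectness of $\langle\cdot,\cdot\rangle_S$.

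Next I would prove Hecke-equivariance, that is, self-adjointness of each Hecke operator. For $x,y\in H^1(\XX_0(N,S),\OO(\ws))$ the computation is entirely formal:
\[
\langle T_p x,y\rangle_S=b^\s_S(T_p x,\theta_S(y))=b^\s_S\bigl(x,\widetilde T_p(\theta_S(y))\bigr)=b^\s_S\bigl(x,\theta_S(T_p y)\bigr)=\langle x,T_p y\rangle_S,
\]
where the second equality is Lemma \ref{lemmad}b) applied to the class $\theta_S(y)\in H^1(\Phi_0(NS),\OO(\tp^{-1}))$, and the third is Lemma \ref{lemmad}a). The identical chain applies to every prime for which the operators of Lemma \ref{lemmad} are defined, so the full Hecke algebra acts self-adjointly with respect to $\langle\cdot,\cdot\rangle_S$.

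I do not expect a genuine obstacle here: the analytic input (non-degeneracy of the cup product, via Poincar\'e duality for the co-compact lattice $\Phi_0(N,S)$) has been absorbed into Proposition \ref{perfect}, and the algebraic input (commutation of $T_p$ with $\theta_S$ and with $b^\s_S$) into Lemma \ref{lemmad}. The only point demanding care is keeping the two twists straight: one must verify that $\theta_S$ really lands in the $\OO(\ws^{-1})$-space so that Lemma \ref{lemmad}b) is applied on the correct factor, and that the operator appearing after $\theta_S$ is $\widetilde T_p$ rather than $T_p$ (the relation $\widetilde T_p=\psi(p)T_p$ for $p\nmid NS$ of Lemma \ref{lemmad}c) is not needed for equivariance but confirms the normalizations are compatible).
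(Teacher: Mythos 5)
Your proposal is correct and follows essentially the same route as the paper: perfectness is deduced from Proposition \ref{perfect} together with the fact that $\theta_S$ is an isomorphism, and Hecke-equivariance is exactly the chain $\langle T_px,y\rangle_S=b^\s_S(T_px,\theta_S(y))=b^\s_S(x,\widetilde T_p\theta_S(y))=b^\s_S(x,\theta_S(T_py))=\langle x,T_py\rangle_S$ via Lemma \ref{lemmad}b) and a). Your explicit factorization of the adjoint map for perfectness is slightly more detailed than the paper's one-line citation, but it is the same argument.
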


\proof\\
$1.$ By proposition (\ref{perfect}) the pairing $\langle .,.\rangle_S$ is perfect.\\
$2.$ It is Hecke-equivariant, infact for all $x,y\in  H^1(\Phi_0(NS),\OO(\tp))$:
\begin{eqnarray}
\langle T_px,y\rangle_S&=& b^\s_S(T_p x,\theta_S(y))\stackrel{Lemma \ref{lemmad}, b)}{=} b^\s_S(x,\widetilde T_p(\theta_S(y)))\nonumber\\
&\stackrel{Lemma \ref{lemmad}, a)}{=}& b^\s_S(x,\theta_S(T_p y))=\langle x,T_p y\rangle_S\nonumber
\end{eqnarray}
\qed

A straightforward calculation allows to establish the following facts:
 \begin{Propo1}\label{prop:scambio}
Let $S'=S\cup\{q\}$ where $q$ is a prime number not dividing $\Delta NS\ell$. Then for all $x\in H^1(\Phi_0(N,S),\OO(\tp))$
\begin{itemize}
\item[a)] $\theta_{S'}(\Res_{\Phi_0(NS)/\Phi_0(NS')}(x))=(\theta_{S}(x))|_{\eta_q}$.
\item[b)] $\theta_{S'}(x|_{\eta_q})=\Res_{\Phi_0(NS)/\Phi_0(NS')}\theta_{S}(x)$.
\end{itemize}
\end{Propo1}

By recalling that restriction and corestriction are mutually adjoint with respect to cup product, that $b^\s_{S'}(x|_{\eta_q},y|_{\eta_q})=(q+1)b^\s_S(x,y)$ and that $T_q(x)= \Cor(x|_{\eta_q})$ (see for example \cite{Shimura71} we obtain the following formulas:
\begin{Lem1}\label{lem:dualita}
Let $S'=S\cup\{q\}$ where $q$ is a prime number not dividing $\Delta NS\ell$. The following properties hold for all $x,y\in H^1(\Phi_0(NS),\OO(\tp))$:
\begin{itemize}
\item[a)] $\langle\Res(x),\Res(y)\rangle_{S'}=\langle \psi^{-1}(q)T_q x,y\rangle_S$;
\item[b)] $\langle\Res(x),y|_{\eta_q}\rangle_{S'}=\langle x|_{\eta_q},\Res(y)\rangle_{S'}=(q+1)\langle x,y\rangle_S$.
\item[c)] $\langle x|_{\eta_q},y|_{\eta_q}\rangle_{S'}=\langle T_q x,y\rangle_S$.
\end{itemize}
\end{Lem1}

\subsection{Proof of Theorem \ref{conj2}}

 The proof is by induction on $|S|$. If $S$ is empty, the result has been proven in \cite{miri2006, Terracini03} by the construction of a suitable Taylor-Wiles system. Then we assume that the claim is true for $S$ and we deduce if for $S'=S\cup\{q\}$.
Suppose firstly that the polynomial
\begin{equation}\label{eq:polinomio} X^2-T_qX+q\psi(q) \end{equation}
has distinct roots $a,b\in k$ modulo $\mm_S$. Suppose moreover that $q^2\not \equiv 1 \hbox{ mod } \ell$. Then we can suppose that $a\not = qb$ in $k$. Since $\TT_{S}$ is an Henselian ring, there exists a root $A\in \TT_{S}$  such that $A$ reduces to $a$ modulo $\mm_{S}$. Define a map $\tilde\alpha: \MM_S\to \MM_{S'}$ by $\tilde\alpha(x)={\Res}(Ax) -x|_{\eta_q}$. Then $\tilde\alpha$ is a map of $\TT_{S'}$ modules and by Conjecture \ref{noscon} it is injective with torsion-free cokernel. Let $\tilde\beta$ be the transpose of $\tilde\alpha$ w.r.t. the pairings $\langle\cdot,\cdot\rangle_S$ and $\langle\cdot,\cdot\rangle_{S'}$; by Lemma \ref{lem:dualita} we compute

$$\langle\tilde\alpha(x),\tilde\alpha(y)\rangle_{S'} = \langle \psi^{-1}(q)T_qA^2x,y\rangle_{S}-2(q+1)\langle Ax,y\rangle_S+\langle T_q x,y\rangle_S$$
so that (notice that $A$ is a unit in $\TT_S$) the composition $\tilde\beta\circ\tilde\alpha$ is the multiplication by
\begin{eqnarray*} c&=&\psi^{-1}(q)T_qA^2-2(q+1)A+T_q\\
&=& \frac{\psi^{-1}(q)}{A}(A^2-q\psi(q))(A^2-\psi(q)).\end{eqnarray*}
On the other hand, the form $f$ determines maps $\pi_{S'}:\TT_{S'}\to\OO$ and $\pi_S:\TT_S\to\OO$ with kernels $\gp_{S'}$ and $\gp_S$ respectively. We want to compute
$$K=\ker(\gp_{S'}/\gp_{S'}^2\to \gp_S/\gp_S^2).$$
 It is easy to show with the techniques employed in \cite{miri2006, Terracini03} that the versal local deformation ring of $\bar\rho_q$ without ramification conditions concides with the versal ring with a ramification conditions unless the congruence
\begin{equation}\label{eq:congruencecond} a_q(f)^2\equiv \psi(q)(q+1)^2\ \modulo \ \lambda\end{equation}
 is satisfied. Therefore, $K$ is zero if (\ref{eq:congruencecond}) is not satified. Suppose now that (\ref{eq:congruencecond}) is satisfied. In this case the versal ring without ramification conditions is isomorphic to $\OO[[X,Y]]/(XY)$, where $X$ is equal, up to units, to  $Trace(\rho(Frob_q))^2-\psi(q)(q+1)^2$; by imposing the non-ramification condition $Y$ annihilates and the corresponding versal  becomes $\OO[[X]]$. By the methods of \cite{miri2006, Terracini03} it is easy to see that $K$ is the $\OO$-module generated by the image of $Y$ in $\gp_{S'}/\gp_{S'}^2$ and that it is isomorphic to $\OO/(a_q(f)^2-\psi(q)(q+1)^2)$. Notice that
$$T_q^2-\psi(q)(q+1)^2=A^{-2}(A^2-q^2\psi(q))(A^2-\psi(q))$$
and under our hypotheses $v_{\lambda}(c)=v_{\lambda}(\pi_{S'}(T_q^2-\psi(q)(q+1)^2)$.
Thus the claim is proved if $a\not=b$.
Assume now that $a=b$ and consider the Ihara map
\begin{eqnarray*} \tilde\alpha:{\mathcal M}_S^2\to {\mathcal M}_{S'}\end{eqnarray*}
Since $q^2\not\equiv 1 \hbox{ mod } \ell$, $\mathcal{B}_{S'}=\mathcal{B}_S$ so that $\TT_{S'}=\TT_S$ and $\tilde\alpha$ is an isomorphism. Therefore   ${\mathcal M}_{S'}$ is free of rank $4$ over $\TT_{S'}$. We have $\Omega_{S'}\simeq \Omega_S^2$ so that $lenght_\OO(\Omega_{S'})=2lenght_\OO(\Omega_S)=4lenght_\OO(\gp/{\gp^2})$ so that the map $\mR_{S'}\to \TT_{S'}=\TT_S$ is an isomorphism of complete intersection.\par

We remark that in the case $a=b$ the full Hecke algebra (with the $T_q$-operator) $\TT_{S',cohom}$ acting on $\MM_S'$ is isomorphic to $\TT_S[X]/(X^2-T_qX+q\psi(q))$ so that ${\mathcal M}_{S'}$ is free of rank $2$ over $\TT_{S',cohom}$.

\subsection{A consequence about congruences of modular forms}

\noindent   Under Conjecture \ref{noscon}, we can prove the following result about raising the level of modular forms:
\begin{theorem}\label{teo}
Assume Conjecture \ref{noscon}.
Let $f=\sum a_nq^n$ be a normalized newform in $S_2(\Gamma_0(N\Delta'\ell^2),\psi)$, supercuspidal of type $\tau=\chi\oplus\chi^\sigma$ at $\ell$, special at every prime $p|\Delta'$. Let $q$ be a prime not dividing $N\Delta$ such that $q^2\not\equiv 1\ \modulo \ell$; then there exists $g\in S_2(\Gamma_0(qN\Delta'\ell^2),\psi)$ supercuspidal of type $\tau$ at $\ell$, special at every prime $p|\Delta'$ such that $f\equiv g\ \modulo\ \lambda$ if and only if \begin{equation}\label{eq:condizione} a_q^2\equiv\psi(q)(q+1)^2\ \modulo\ \lambda\end{equation}.
\end{theorem}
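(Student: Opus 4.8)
The plan is to convert the existence of $g$ into the non‑vanishing of the $q$‑new part of the cohomology of $\XX_1(N,q)$, and then to read off that non‑vanishing from the duality developed in \S\ref{Sect:duality}. Via the Jacquet–Langlands correspondence and the Matsushima–Shimura isomorphism recalled in \S\ref{sect:conseguenze}, a newform $g\in S_2(\Gamma_0(qN\Delta'\ell^2),\psi)$ of the prescribed type that is genuinely of level divisible by $q$ corresponds to a Hecke eigensystem occurring in $\MM_{S'}=H^1(\XX_1(N,q),\OO)^{\ws}_{\mm_{S'}}$ (with $S'=\{q\}$) but not in the span of the two degeneracy maps from level $N$. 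Writing $\alpha\colon\MM_S^2\to\MM_{S'}$ for the Ihara map $(x,y)\mapsto\Res(x)+y|_{\eta_q}$, its image is exactly the $q$‑old part, so a $g$ congruent to $f$ exists if and only if $\mathcal C=\coker(\alpha)$ is non‑zero after localisation at $\mm$. By Conjecture \ref{noscon} the map $\alpha_\mm$ is injective with $\mathcal C$ torsion‑free over $\OO$, so $\mathcal C_\mm\neq0$ is equivalent to $\mathcal C_\mm\otimes K\neq0$, i.e. to the presence of a characteristic‑zero $q$‑new eigenform at $\mm$ (necessarily congruent to $f$, since $\mm$ encodes $\orho_f$). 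Thus the statement reduces to the equivalence $\mathcal C_\mm\neq0\iff a_q^2\equiv\psi(q)(q+1)^2\ \modulo\ \lambda$.

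For the direction in which the existence of $g$ forces (\ref{eq:condizione}) I would argue locally at $q$. A newform new at $q$ of weight $2$ with nebentypus unramified at $q$ is special (Steinberg up to unramified twist) there, so $\rho_g$ is reducible on the decomposition group $G_q$, with semisimplification $\chi'\oplus\epsilon\chi'$; since $\det\rho_g=\epsilon\psi$ one gets $\chi'(\Frob_q)^2=\psi(q)$, hence $\tr\rho_g(\Frob_q)^2=\psi(q)(q+1)^2$. Reducing modulo $\lambda$ and using $\orho_g=\orho_f$ together with $\tr\orho_f(\Frob_q)=a_q$ gives $a_q^2\equiv\psi(q)(q+1)^2$. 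This is just the compatibility between the unramified eigenvalue of $f$ and the Steinberg eigenvalue of $g$, whose square is $\psi(q)$.

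The substantial direction is the converse, and here I would exploit the perfect pairings $\langle\cdot,\cdot\rangle_S$, $\langle\cdot,\cdot\rangle_{S'}$ of Theorem \ref{teo.dual}. Let $\beta\colon\MM_{S'}\to\MM_S^2$ be the adjoint of $\alpha$. Because $\mathcal C$ is $\OO$‑torsion‑free, dualising $0\to\MM_S^2\stackrel{\alpha}{\to}\MM_{S'}\to\mathcal C\to0$ and invoking perfectness identifies $\alpha^{*}$ with $\beta$ and shows $\beta$ is surjective. The composite $\beta\circ\alpha$ is then computed purely formally from Lemma \ref{lem:dualita}: expanding $\langle\alpha(x_1,y_1),\alpha(x_2,y_2)\rangle_{S'}$ yields the Gram operator
\[
\beta\circ\alpha=\left(\begin{array}{cc}\psi^{-1}(q)\,T_q & q+1\\ q+1 & T_q\end{array}\right),\qquad \det(\beta\circ\alpha)=\psi^{-1}(q)\bigl(T_q^2-\psi(q)(q+1)^2\bigr).
\]
Now assume (\ref{eq:condizione}) but, for contradiction, $\mathcal C_\mm=0$. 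Then $\alpha_\mm$ is injective and surjective, hence an isomorphism, and so is $\beta_\mm$; therefore $\beta_\mm\circ\alpha_\mm$ is an automorphism of $(\MM_S)_\mm^2$. Applying the character $\pi_S$ attached to $f$ (whose prime $\gp_S$ lies in the support of $\MM_S$, and on which $\MM_S$ is free by Theorem \ref{conj2}) to the determinant above, invertibility forces $\psi^{-1}(q)\bigl(a_q^2-\psi(q)(q+1)^2\bigr)\in\OO^\times$, i.e. $a_q^2\not\equiv\psi(q)(q+1)^2\ \modulo\ \lambda$, contradicting the hypothesis. Hence $\mathcal C_\mm\neq0$ and the desired $g$ exists.

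The main obstacle I expect is concentrated in this last paragraph: first, passing from ``torsion‑free cokernel'' to ``$\beta$ surjective'' requires the $\OO$‑perfectness of both pairings together with the $\OO$‑flatness of $\mathcal C$, and second, the Gram‑matrix computation of $\beta\circ\alpha$ must match Lemma \ref{lem:dualita} term by term without sign or normalisation slips (this is where $q^2\not\equiv1$ and $q\nmid N\Delta\ell$ enter, through Theorem \ref{conj2} and the well‑definedness of the degeneracy maps). The remaining delicate point is foundational rather than computational: ensuring that the Jacquet–Langlands/Matsushima–Shimura dictionary identifies $\mathcal C_\mm\otimes K$ precisely with classical newforms of level $qN\Delta'\ell^2$ that are special at $q$ and retain the type $(\{q\},\mathrm{sp},\tau,\psi)$, so that the cohomological non‑vanishing indeed produces a form $g$ of the exact shape demanded in the statement.
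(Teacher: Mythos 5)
Your argument is essentially correct, but it is a genuinely different proof from the one in the paper. The paper disposes of Theorem \ref{teo} in one line: by Theorem \ref{conj2} \emph{and its proof}, condition (\ref{eq:condizione}) is equivalent to non-injectivity of $\TT_{S'}\to\TT_S$, which via the definition of $\mathcal{B}_S\subseteq\mathcal{B}_{S'}$ (the Jacquet--Langlands dictionary) is equivalent to the existence of $g$. The substance there is deformation-theoretic: the versal deformation ring of $\orho|_{G_q}$ acquires the extra variable $Y$ exactly when (\ref{eq:condizione}) holds, and the isomorphism $\mathcal R_{S'}\cong\TT_{S'}$ transports this to the Hecke side. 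You instead argue in the style of Ribet's original level-raising proof: the ``only if'' direction by the local shape of $\rho_g$ at $q$ (Steinberg up to unramified twist, forcing $\tr\,\orho(\Frob_q)^2\equiv\psi(q)(q+1)^2$), and the ``if'' direction by computing the Gram operator of the two-variable Ihara map with respect to the pairings of Section \ref{Sect:duality}, whose determinant $\psi^{-1}(q)\bigl(T_q^2-\psi(q)(q+1)^2\bigr)$ would be a unit in $\TT_S$ if $\alpha_\mm$ were an isomorphism; Conjecture \ref{noscon} enters only to convert ``no $q$-new eigenform in characteristic zero'' (i.e.\ $\coker(\alpha_\mm)\otimes K=0$) into surjectivity of $\alpha_\mm$. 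Your Gram computation is the two-variable cousin of the paper's own computation of $c=\tilde\beta\circ\tilde\alpha$ inside the proof of Theorem \ref{conj2} (done there for $\tilde\alpha(x)=\Res(Ax)-x|_{\eta_q}$ and fed into Diamond's numerical criterion), but you apply it directly to level raising instead of routing through $R=T$. What your route buys: no deformation theory, a clean separation of which implication uses the conjecture, and apparently no use at all of the hypothesis $q^2\not\equiv1\ \modulo\ \ell$, which the paper needs only because it passes through Theorem \ref{conj2}. What the paper's route buys: the theorem falls out as an immediate corollary once Theorem \ref{conj2} is proved, and the dictionary issues you flag (that classes in $\coker(\alpha_\mm)\otimes K$ are genuinely $q$-new and of the exact type $(\{q\},{\rm sp},\tau,\psi)$) are absorbed into the definitions of $\mathcal{B}_S$, $\mathcal{B}_{S'}$. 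Two points you should make explicit: injectivity of $\alpha_\mm$ is not literally part of Conjecture \ref{noscon} (it follows from injectivity after $\otimes K$ plus $\OO$-torsion-freeness of $\MM_S^2$, a step the paper also takes tacitly), and the passage from ``$\beta_\mm\circ\alpha_\mm$ is an automorphism'' to ``$\det$ is a unit in $\TT_S$'' needs either the freeness of $\MM_S$ over $\TT_S$ (unconditional for $S=\emptyset$ by \cite{Terracini03,miri2006}) or a Nakayama argument in the local ring $\TT_S$.
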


\begin{proof} By Theorem \ref{conj2} and its proof we know that Condition (\ref{eq:condizione}) is equivalent to the non injectivity of the map $T_{S'}\to T_S$.
\end{proof}

\noindent If we do not make the  additional hypotheses on the local type the modular forms, then the above Conjecture is the classical \lq\lq raising the level\rq\rq problem that was first addressed in \cite{Ribet83b}.\\

{}

\bibliographystyle{alpha}
\bibliography{biblio}

\bigskip

\begin{center}
Miriam Ciavarella\\
Dipartimento di Matematica\\
Universit\`a degli Studi di Torino\\
Via Carlo Alberto 8, 10123 Torino, Italy\\

\bigskip

Lea Terracini\\
Dipartimento di Matematica\\
Universit\`a degli Studi di Torino\\
Via Carlo Alberto 8, 10123 Torino, Italy
\end{center}

\end{document}